\author{Paul Pollack}
\address{University of Georgia\\Department of Mathematics\\Athens, Georgia 30602\\USA}
\email{pollack@uga.edu}
\keywords{Erd\H{o}s--Kac theorem, Davenport constant, number field, irreducible element}
\subjclass[2000]{11N37. Secondary 11R27, 11R29.}
\title[An elemental Erd\H{o}s-Kac theorem]{An elemental Erd\H{o}s--Kac theorem for\\ algebraic number fields}
\DeclareMathAlphabet{\curly}{U}{rsfs}{m}{n}
\newtheorem{thm}{Theorem}
\newtheorem{prop}[thm]{Proposition}
\newtheorem{lem}[thm]{Lemma}
 \newtheoremstyle{component}{}{}{}{}{\bf}{.}{.5em}{\thmnote{#3}#1}
    \theoremstyle{component}
\theoremstyle{remark}
\newtheorem*{example}{Example}
\newtheorem*{remark}{Remark}
\begin{document}
\renewcommand{\labelenumi}{(\roman{enumi})}
\def\sumprime{\sideset{}{^{'}}{\sum}}
\def\A{\curly{A}}
\def\Cc{\mathcal{C}}
\def\lcm{\mathrm{lcm}}
\def\Q{\mathbf{Q}}
\def\bb{\mathfrak{b}}
\def\Qq{\curly{Q}}
\def\Z{\mathbf{Z}}
\def\R{\curly{R}}
\def\pp{\mathfrak{p}}
\def\ssf{\curly{S}}
\def\Pp{\curly{P}}
\def\aa{\mathfrak{a}}
\def\e{\mathrm{e}}
\def\C{\mathbf{C}}
\def\Oo{\mathcal{O}}
\def\Id{\mathrm{Id}}
\def\Cl{\mathrm{Cl}}
\def\uu{\mathfrak{u}}
\def\dd{\mathfrak{d}}
\def\Prin{\mathrm{Prin}}
\renewcommand{\ssf}{\mathfrak{s}}
\newcommand{\pmid}{\mathrel{\mid^{\ast}}}
\newcommand{\pnmid}{\mathrel{\nmid^{\ast}}}
\begin{abstract} Fix a number field $K$. For each nonzero $\alpha \in \Z_K$, let $\nu(\alpha)$ denote the number of distinct, nonassociate irreducible divisors of $\alpha$. We show that $\nu(\alpha)$ is normally distributed with mean proportional to $(\log\log |N(\alpha)|)^{D}$ and standard deviation proportional to $(\log\log{|N(\alpha)|})^{D-1/2}$. Here $D$, as well as the constants of proportionality, depend only on the class group of $K$. For example, for each fixed real $\lambda$, the proportion of $\alpha \in \Z[\sqrt{-5}]$ with
\[ \nu(\alpha) \le \frac{1}{8}(\log\log{N(\alpha)})^2 + \frac{\lambda}{2\sqrt{2}} (\log\log{N(\alpha)})^{3/2} \]
is given by $\frac{1}{\sqrt{2\pi}} \int_{-\infty}^{\lambda} e^{-t^2/2}\, \mathrm{d}t$. As further evidence that ``irreducibles play a game of chance'', we show that the values $\nu(\alpha)$ are equidistributed modulo $m$ for every fixed $m$.
\end{abstract}
\maketitle

\section{Introduction} The field of probabilistic number theory was born in 1939 out of a fruitful collaboration of Erd\H{o}s and Kac. Let $\omega(n)$ denote the number of distinct prime factors of the positive integer $n$. The celebrated \emph{Erd\H{o}s--Kac theorem} asserts that the quantity
\[\frac{\omega(n)-\log\log{x}}{\sqrt{\log\log{x}}}, \]
thought of as a random variable on the natural numbers $n \le x$ (with the uniform measure), converges in law to a standard Gaussian, as $x\to\infty$ \cite{EK40}. In this statement, $\log\log{x}$ may be changed to $\log\log{n}$ without affecting the meaning, since the two quantities differ by less than $1$ for all $n \in (x^{1/e},x]$. Thus, the theorem is often summarized by saying that $\omega(n)$ is normally distributed with mean $\log\log{n}$ and standard deviation $\sqrt{\log\log{n}}$.

Variants of the Erd\H{o}s--Kac theorem abound (see \cite{dekroon66}, \cite{elliott80}, \cite{liu04}, \cite{KL08}, and the references in \cite{GS07}). In this article, we describe what appears to be a new generalization in the number field setting.

Suppose that $K$ is a number field with ring of integers $\Z_K$. Let $\Id(\Z_K)$ denote the (commutative, cancellative) monoid of nonzero integral ideals of $\Z_K$, and let $\Prin(\Z_K)$ denote the submonoid of principal ideals. For each $\mathfrak{a}\in \Id(\Z_K)$, let $\omega(\mathfrak{a})$ denote the number of distinct prime ideal factors of $\mathfrak{a}$. In \cite{liu04}, Liu proves an $\Id(\Z_K)$-generalization of Erd\H{o}s--Kac, namely that $\omega(\mathfrak{a})$ is normally distributed with mean $\log\log N(\mathfrak{a})$ and standard deviation $\sqrt{\log\log N(\mathfrak{a})}$.

The ``fundamental theorem of ideal theory'' asserts that $\Id(\Z_K)$ is a factorial monoid, with the prime elements in the monoid sense coinciding with the nonzero prime ideals of $\Z_K$. By contrast, $\Prin(\Z_K)$ is in general not factorial, as shown by the famous example
\[ (1+\sqrt{-5}) (1-\sqrt{-5}) = (2)(3) \]
when $K = \Q(\sqrt{-5})$. Notwithstanding the failure of unique factorization, it is still sensible to count the number of irreducible divisors of an element of $\Prin(\Z_K)$ and to ask if something like the Erd\H{o}s--Kac theorem holds. Our main theorem asserts that this is indeed the case.

For each nonzero $\alpha \in \Z_K$, we let $\nu(\alpha)$ denote the number of nonassociate irreducible divisors of $\alpha$. (Equivalently, $\nu(\alpha)$ is the number of irreducible divisors of $(\alpha)$ in the monoid $\Prin(\Z_K)$.) We let $\log_k$ denote  the $k$-fold iterated logarithm.

\begin{thm}\label{thm:main} Let $K$ be a number field. There are positive constants $A$ and $B$, as well as a positive integer $D$, such that the following holds. For each fixed $\lambda > 0$,
\[ \frac{\#\{(\alpha): 0 < |N(\alpha)|\le x \text{ and } \nu(\alpha)\le A (\log_2{x})^{D} + \lambda \cdot B (\log_2{x})^{D-\frac12}\}}{\#\{(\alpha): 0 < |N(\alpha)|\le x\}} \to \int_{-\infty}^{\lambda} e^{-t^2/2}\, \mathrm{d}t,\]
as $x\to\infty$. Moreover, the constants $A$, $B$, and $D$ depend only on the isomorphism type of the class group of $K$.
\end{thm}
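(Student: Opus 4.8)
The plan is to reduce $\nu$ to a fixed polynomial in class-wise prime counts and then run a two-step probabilistic argument. Write $G=\Cl(\Z_K)$ and $h=|G|$. By the standard translation between irreducibles and minimal zero-sum sequences over the class group, an irreducible divisor of $(\alpha)$ corresponds to a principal divisor $\prod_\pp \pp^{e_\pp}\mid(\alpha)$ whose sequence of prime-ideal classes (each $[\pp]$ repeated $e_\pp$ times) is a \emph{minimal zero-sum sequence} in $G$: it sums to $0$, but no proper nonempty subsequence does. For $g\in G$ let $\omega_g(\alpha)$ count the distinct prime ideals of class $g$ dividing $(\alpha)$. A minimal zero-sum sequence uses at most $\mathrm{ord}(g)$ copies of each $g$, so when $(\alpha)$ is squarefree one has exactly
\[ \nu(\alpha) = Q\big((\omega_g)_g\big) := \sum_{(m_g)}\ \prod_{g\in G}\binom{\omega_g}{m_g}, \]
the sum extending over all types $(m_g)_{g\in G}$ of minimal zero-sum sequences. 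The degree of $Q$ is the maximal length of such a sequence, namely the Davenport constant $D=D(G)$, which depends only on the isomorphism type of $G$.

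Next I would establish a multivariate Erd\H{o}s--Kac law for the vector $(\omega_g(\alpha))_{g\in G}$ as $(\alpha)$ ranges over principal ideals with $|N(\alpha)|\le x$. Picking out the principal class by character orthogonality, $\mathbf 1[\aa\ \text{principal}]=\frac1h\sum_\chi\chi(\aa)$, the joint generating sum $\sum_{N(\aa)\le x}\chi(\aa)\prod_g t_g^{\omega_g(\aa)}$ is multiplicative in $\aa$, so each $\chi$-twist is accessible by Selberg--Delange. For $\chi=\chi_0$, Landau's prime-ideal theorem for ideal classes gives $\sum_{[\pp]=g}N\pp^{-s}\sim\frac1h\log\frac{1}{s-1}$, so the Dirichlet series has a singularity $(s-1)^{-\frac1h\sum_g t_g}$; the nontrivial characters contribute no pole because $L(s,\chi)$ is holomorphic and nonvanishing at $s=1$, hence are lower order. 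Thus, over principal ideals, the $\omega_g(\alpha)$ are asymptotically \emph{independent}, each normal with mean and variance $\frac1h\log_2 x$, exactly as in Liu's $\Id(\Z_K)$-theorem restricted to a single class.

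Finally I would transfer this to $\nu$ by the delta method. Writing $\omega_g=\frac1h L+\sqrt{\frac1h L}\,Z_g$ with $L=\log_2 x$ and $(Z_g)$ asymptotically independent standard normals, and letting $Q_D$ be the top-degree part of $Q$, homogeneity gives
\[ Q_D(\omega)=\Big(\tfrac1h L\Big)^{D}\Big(Q_D(\mathbf 1)+\sqrt{\tfrac{h}{L}}\sum_g\partial_g Q_D(\mathbf 1)\,Z_g+O(L^{-1})\Big). \]
Hence $\nu(\alpha)$ is asymptotically normal with mean $A L^{D}$, $A=h^{-D}Q_D(\mathbf 1)$, and standard deviation $B L^{D-1/2}$, $B=h^{-D+\frac12}\big(\sum_g(\partial_g Q_D(\mathbf 1))^2\big)^{1/2}$; both depend only on $G$. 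As a check, for $K=\Q(\sqrt{-5})$ one has $G=\Z/2\Z$, the only length-$2$ minimal zero-sum type is $(1,1)$, so $Q_D=\frac12\,\omega_1^2$, giving $A=2^{-2}\cdot\frac12=\frac18$ and $B=2^{-3/2}=\frac{1}{2\sqrt2}$, matching the abstract.

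The hard part will be the squarefree reduction: showing $\nu(\alpha)=Q((\omega_g))+(\text{negligible})$ with an error small enough not to disturb the Gaussian limit. Two sources must be bounded --- irreducibles that genuinely use a repeated prime (the non-squarefree part) and the global nature of the minimality condition --- and each must be shown to contribute $o(L^{D-1/2})$ with high enough probability, e.g.\ via a second-moment estimate. One also needs enough uniform moment control to push the unbounded polynomial $Q$ through the weak convergence of $(\omega_g)$; this is routine given the deterministic bound $\omega(\alpha)\ll\log x/\log_2 x$ together with standard large-deviation estimates for $\omega$, but it is where the real work lies.
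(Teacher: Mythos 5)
Your combinatorial reduction is the same as the paper's --- classify irreducible divisors by the type (minimal zero-sum multiset of classes) of their prime ideal factorization, observe that the maximal-length types dominate with degree equal to the Davenport constant $D$, and Taylor-expand the resulting degree-$D$ polynomial in the $\omega_g$ around the typical value $\frac{1}{h}\log_2 x$; your constants $A = h^{-D}Q_D(\mathbf{1})$ and $B = h^{-D+\frac12}\bigl(\sum_g (\partial_g Q_D(\mathbf{1}))^2\bigr)^{1/2}$ agree exactly with the paper's (your $\partial_g Q_D(\mathbf{1})$ is the paper's $\kappa_g$). Where you genuinely diverge is the probabilistic core. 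You propose a \emph{multivariate} Erd\H{o}s--Kac law for the vector $(\omega_g(\alpha))_g$ over principal ideals, obtained by class-group character orthogonality and Selberg--Delange applied to the twisted generating Dirichlet series, followed by the delta method. The paper instead linearizes \emph{first}: after discarding the exceptional set, the fluctuation of $\nu$ is controlled by the single scalar additive function $f(\alpha)=\sum_j \kappa_j\omega_j(\alpha)$, for which a one-dimensional Erd\H{o}s--Kac theorem (Theorem \ref{thm:EKstandard}) is proved by the Granville--Soundararajan moment method, needing only Weber's ideal count and Landau's $\sum_{N\pp\le x,\,\pp\in\mathcal{C}}1/N\pp$. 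By Cram\'er--Wold the two are morally equivalent --- the paper's moment computation is precisely the verification that linear combinations of the $\omega_g$ have the ``independent Gaussian'' variance $\frac1h(\sum\kappa_j^2)\log_2x$ you predict --- but the scalar route avoids complex-analytic machinery and the uniformity-in-$t_g$ issues that a quasi-powers/Selberg--Delange argument for joint convergence would require (your remark that nontrivial characters ``contribute no pole'' needs to be sharpened to a statement about the real part of the singularity exponent of $\prod_\chi L(s,\chi)^{c_\chi}$, uniformly in the auxiliary parameters).

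The loose ends you flag are real but routine, and the paper's solutions are worth noting. The non-squarefree correction is handled not by restricting to squarefree $\alpha$ but by sandwiching $\prod_i\binom{\omega_i(\alpha)}{t_i}\le\nu_\tau(\alpha)\le\prod_i\binom{\Omega_i(\alpha)}{t_i}$ and showing via a first-moment (Tur\'an-type) bound that $\Omega_i(\alpha)-\omega_i(\alpha)<\log_3 x$ outside a set of density $O(1/\log_3 x)$, which makes the discrepancy $O((\log_2x)^{D-1}\log_3 x)=o((\log_2x)^{D-\frac12})$; and the unboundedness of $Q$ is tamed by the same truncation, restricting to $|\omega_i(\alpha)-\frac1h\log_2x|<(\log_2x)^{2/3}$, which fails only on a set of density $o(1)$ by a second-moment estimate. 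Neither step requires large-deviation bounds.
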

\noindent We can summarize Theorem \ref{thm:main} as asserting that $\nu(\alpha)$ has a normal distribution with mean $A(\log_2{|N(\alpha)|})^{D}$ and standard deviation $B(\log_2{|N(\alpha)|})^{D-\frac{1}{2}}$.

We say a little about the values of  $A$, $B$, and $D$. Of the three, $D$ is the simplest to describe: It is the smallest integer with the property that any sequence of $D$ elements of the class group $\Cl(\Z_K)$ contains a nonempty subsequence which multiplies to the identity. (If $G$ is any finite abelian group, the analogous quantity has become known as the \emph{Davenport constant} of $G$, and there is now a large literature on determining values of Davenport constants.) The appearance of $D$ in Theorem \ref{thm:main} is not so surprising. In fact, the constant $D$ is important to us for precisely the same reason it first caught the attention of Davenport: $D$ is the maximal number of prime ideals (counting multiplicity) that appear in the decomposition of an irreducible element of $\Z_K$.\footnote{According to Olson \cite{olson69}, Davenport reported this observation at the {M}idwestern conference on group theory
  and number theory, {O}hio {S}tate {U}niversity, April 1966.} The constants $A$ and $B$ are more complicated to define, but in the case when $\Cl(\Z_K)$ is cyclic of order $h$, we will show that $A= \phi(h) h^{-h} h!^{-1}$ and $B= h^{-h+3/2} h!^{-1} \phi(h)^{1/2}$.

A few words about strategy are in order. The function $\nu(\alpha)$ is not additive in any reasonable sense; even if $\alpha$ and $\beta$ generate comaximal ideals of $\Z_K$, we need not have $\nu(\alpha \beta) = \nu(\alpha)+\nu(\beta)$. For example, in $\Z[\sqrt{-5}]$, we have $\nu(2) = 1$ and $\nu(3)=1$, whereas $\nu(6)=4$. To work around this, we cook up  an additive function $f$ on $\Id(\Z_K)$ such that the behavior of $\nu$ is --- most of the time, and on the scale important for us --- determined by the distribution of $f$ restricted to $\Prin(\Z_K)$. We then study the distribution of $f|_{\Prin(\Z_K)}$ using the method of Granville--Soundararajan for proving Erd\H{o}s--Kac type theorems \cite{GS07}.

Of course, many other problems concerning $\nu$ could be investigated. We content ourselves with proving one additional result further reinforcing that ``irreducibles play a game of chance.''

\begin{thm}\label{thm:equidistribution} Fix $m \in \Z^{+}$. Then $\nu(\alpha)$ is equidistributed modulo $m$ as $\alpha$ ranges over $\Z_K$. More precisely, for each $a\in \Z$,
\[ \lim_{x\to\infty} \frac{\#\{(\alpha): 0 < |N(\alpha)| \le x,~\nu(\alpha)\equiv a\pmod{m}\}}{\#\{(\alpha): 0 < |N(\alpha)| \le x\}} = \frac{1}{m}. \]
\end{thm}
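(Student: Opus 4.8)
The plan is to detect the residue of $\nu(\alpha)$ modulo $m$ using additive characters and reduce the whole statement to a single cancellation estimate. By orthogonality on $\Z/m\Z$, the claimed equidistribution is equivalent to the assertion that for every integer $a$ with $1\le a\le m-1$, writing $z=e^{2\pi i a/m}$ (so that $z\neq 1$ and $|z|=1$),
\[ \sum_{0<|N(\alpha)|\le x} z^{\nu(\alpha)} = o(x), \]
the sum running over nonzero principal ideals $(\alpha)$; here I use that the number of such ideals with $|N(\alpha)|\le x$ is $\asymp x$ by Landau's theorem. Thus it suffices to exhibit cancellation in the mean value of the unimodular quantity $z^{\nu(\alpha)}$.

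Since $z^{\nu(\alpha)}$ is not multiplicative, I would first split off the part of $\nu$ that is. Call a prime ideal \emph{principal} if it is principal as an ideal, and factor $(\alpha)=\mathfrak m\mathfrak n$, where $\mathfrak m$ collects the principal prime-power factors and $\mathfrak n$ the non-principal ones. The crucial observation is that every irreducible principal divisor of $(\alpha)$ is \emph{either} a principal prime (hence divides $\mathfrak m$) \emph{or} is supported entirely on non-principal primes (hence divides $\mathfrak n$): if a principal prime $\pp$ divided an irreducible principal ideal $\mathfrak i\mid(\alpha)$, then writing $\mathfrak i=\pp\bb$ with $\bb$ principal (as $\pp$ and $\mathfrak i$ are) would force $\bb=(1)$, i.e. $\mathfrak i=\pp$. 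Consequently
\[ \nu(\alpha)=\omega(\mathfrak m)+\nu^{\ast}(\mathfrak n), \]
where $\omega(\mathfrak m)$ counts the distinct principal primes dividing $\alpha$ and $\nu^{\ast}(\mathfrak n)$ counts the irreducible principal ideals dividing $\mathfrak n$. Moreover $(\alpha)\mapsto(\mathfrak m,\mathfrak n)$ is a bijection between principal ideals and pairs in which $\mathfrak m$ is an arbitrary ideal supported on principal primes and $\mathfrak n$ is an ideal supported on non-principal primes with $[\mathfrak n]$ trivial in $\Cl(\Z_K)$ (this is the only constraint, since $[\mathfrak m]$ is automatically trivial).

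Using this bijection I would write
\[ \sum_{0<|N(\alpha)|\le x} z^{\nu(\alpha)} = \sum_{\substack{\mathfrak n \text{ on non-principal primes}\\ [\mathfrak n]=0,\ N(\mathfrak n)\le x}} z^{\nu^{\ast}(\mathfrak n)}\, M\bigl(x/N(\mathfrak n)\bigr), \qquad M(y)=\sum_{\substack{\mathfrak m \text{ on principal primes}\\ N(\mathfrak m)\le y}} z^{\omega(\mathfrak m)}. \]
The inner sum $M(y)$ is the summatory function of a genuine multiplicative function, and this is where the arithmetic enters: by the Chebotarev density theorem the principal primes have density $1/h$, where $h=|\Cl(\Z_K)|$, so $\sum_{N(\pp)\le y,\ \pp \text{ principal}} z/N(\pp)\sim (z/h)\log_2 y$. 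The Landau--Selberg--Delange method then gives $M(y)\ll y(\log y)^{\operatorname{Re}(z)/h-1}$, with a \emph{negative} exponent because $\operatorname{Re}(z)=\cos(2\pi a/m)<1$. Inserting this bound and summing over $\mathfrak n$ with $N(\mathfrak n)\le\sqrt x$ disposes of that range: using $\sum_{N(\mathfrak n)\le\sqrt x}1/N(\mathfrak n)\ll(\log x)^{(h-1)/h}$ for non-principal-supported $\mathfrak n$, the contribution is $\ll x(\log x)^{(\operatorname{Re}(z)-1)/h}=o(x)$.

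The main obstacle is the complementary range, where the non-principal part $\mathfrak n$ carries almost all of the norm. This is \emph{not} a negligible event—for a positive proportion of $\alpha$ the largest prime factor is non-principal—so there the factor $M(x/N(\mathfrak n))$ is too short to save, and cancellation must instead be extracted from $z^{\nu^{\ast}(\mathfrak n)}$ itself. Unlike the principal sector, the non-principal sector has no length-one generator (irreducibles supported on non-principal primes have length at least two), so the clean ``multiply by a prime, shift $\nu$ by exactly one'' mechanism is unavailable and $z^{\nu^{\ast}}$ is genuinely non-multiplicative. I expect to overcome this using the analytic apparatus behind Theorem~\ref{thm:main}: either a Halász-type mean-value bound showing that $z^{\nu^{\ast}}$ is quantitatively far from every function $N(\cdot)^{it}$, or a direct estimate $\sum_{N(\mathfrak n)\le t} z^{\nu^{\ast}(\mathfrak n)}=o(t)$ derived from the additive model for $\nu$ developed there, applied uniformly enough (via partial summation) to control the tail. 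Securing this uniform cancellation in the non-principal sector is the crux of the proof.
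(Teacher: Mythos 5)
Your reduction via orthogonality to $\sum_{(\alpha)} \zeta^{\nu(\alpha)} = o(x)$ is exactly the paper's first step, and your structural observation --- that an irreducible divisor of $(\alpha)$ is either a principal prime or is supported entirely on non-principal primes, so that $\nu(\alpha) = \omega_1(\alpha) + \sum_{\tau:\, t_1(\tau)=0}\nu_\tau(\alpha)$ --- is also exactly what the paper uses. But the proposal stops at the crux: you explicitly defer the non-principal sector to ``a Hal\'asz-type mean-value bound'' or ``a direct estimate derived from the additive model,'' without producing either, and your convolution framework $\sum_{\mathfrak n} z^{\nu^{\ast}(\mathfrak n)} M(x/N(\mathfrak n))$ steers you into a range ($N(\mathfrak n)$ near $x$, which carries positive proportion) where no cancellation mechanism is available. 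Worse, cancellation genuinely cannot be extracted from $z^{\nu^{\ast}(\mathfrak n)}$ alone: modulo $m$ that quantity decomposes (see below) into multiplicative pieces one of which is identically $1$, so $\sum_{N(\mathfrak n)\le t} z^{\nu^{\ast}(\mathfrak n)}$ has a nonvanishing main term in general. The saving must come from the interaction with the factor $\zeta^{\omega_1}$, which your splitting of the sum into a product of two separate sums destroys.

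The missing idea is the following. Fix the squarefull part $\mathfrak{s}$ of $(\alpha)$ and write $(\alpha)=\mathfrak{s}\mathfrak{u}$ with $\mathfrak{u}$ squarefree and coprime to $\mathfrak{s}$. For each type $\tau$ with $t_1(\tau)=0$, the count $\nu_\tau(\alpha)$ is a finite sum of products of binomial coefficients $\binom{\omega_i(\mathfrak{u})}{t_i(\tau)-\Omega_i(\mathfrak{d})}$ with lower entries bounded by $h$; since $\binom{n}{k}\bmod m$ depends only on $n\bmod mk!$, the residue of $\sum_{\tau:\,t_1=0}\nu_\tau(\alpha)$ modulo $m$ depends only on the vector $(\omega_2(\mathfrak{u}),\dots,\omega_h(\mathfrak{u}))$ modulo $mh!$. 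Finite Fourier analysis then writes $\zeta^{\nu(\alpha)}$ as a fixed $\mathbf{C}$-linear combination of $\zeta^{\omega_1(\mathfrak{u})}\zeta_2^{\omega_2(\mathfrak{u})}\cdots\zeta_h^{\omega_h(\mathfrak{u})}$ --- each a genuinely multiplicative function of $\mathfrak{u}$ of finite order. Every one of these satisfies the hypothesis of Hal\'asz's theorem (in the arithmetic-semigroup form, restricted to the ideal class $[\mathfrak{s}]^{-1}$ via class-group characters), because $\zeta\ne 1$ forces the function to differ from $1$ on all principal primes, whose reciprocal norms diverge. Hence each piece has mean value $o(x)$, uniformly enough to sum over the finitely many relevant squarefull parts after truncating $N(\mathfrak{s})$ using $\sum_{\mathfrak{s}\text{ squarefull}}1/N\mathfrak{s}<\infty$. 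This sidesteps Selberg--Delange entirely and never requires cancellation within the non-principal sector on its own. To complete your write-up you would need to supply this (or an equivalent) mechanism; as it stands the argument is not a proof.
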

\noindent When $K=\Q$, this result is well-known (compare with \cite{selberg39}, \cite{pillai40}, \cite{addison57}). In fact, when $K=\Q$ and $m=2$, it goes back to von Mangoldt \cite{mangoldt97}; that case was later proved to be ``elementarily equivalent'' to the prime number theorem in work of Landau \cite{landau99,landau11}. (Actually, von Mangoldt and Landau deal with squarefree positive integers, but a convolution argument shows that the equidistribution assertion for squarefree integers is ``elementarily equivalent'' to the assertion for all positive integers.) Our proof of Theorem \ref{thm:equidistribution} is easily adapted to prove the  equidistribution mod $m$ of the count of prime ideal divisors of elements of $\Id(\Z_K)$ (this is again classical when $m=2$ \cite{landau03}), or of $\Prin(\Z_K)$; in fact, the arguments in these cases are much simpler.

Several further quantitative problems concerning factorizations in $\Prin(\Z_K)$ have been considered by Geroldinger, Halter-Koch, Kaczorowski, Narkiewicz, Odoni, R\'emond, \'Sliwa, and others. The interested reader is referred to the discussion in Chapter 9 of \cite{narkiewicz04} as well as the extensive end-of-chapter references there. See also \cite[Chapter 9]{GHK06}.

\section{Algebro-analytic input}
For the rest of this paper, $K$ is a degree $d$ number field admitting $r_1$ real embeddings and $r_2$ pairs of complex conjugate embeddings, so that $d=r_1+2r_2$. We let $h:=\#\Cl(\Z_K)$ denote the class number, $R$ the regulator, $\Delta$ the discriminant, and $w$ the number of roots of unity contained in $K$. We fix an ordering $\mathcal{C}_1, \dots, \mathcal{C}_h$ of the elements of $\Cl(\Z_K)$. Elements of $\Id(\Z_K)$ are generally indicated with Fraktur letters; $\mathfrak{p}$ and $\mathfrak{q}$ are reserved for nonzero prime ideals of $\Z_K$. Implied constants may always depend on $K$ without further mention.

The next two results are classical.
\begin{lem}\label{lem:weber} For each ideal class $\mathcal{C}$ of $\Z_K$, and all $x\ge 1$,
\[ \sum_{\substack{N\aa \le x \\ \aa \in \mathcal{C}}} 1 = \frac{ \Psi x }{h}+ O(x^{1-\frac1d}), \quad\text{where}\quad \Psi := \frac{2^{r_1+r_2} \pi^{s} R}{w \sqrt{|\Delta|}}. \]
\end{lem}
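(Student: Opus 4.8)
The plan is to prove this classically, by the geometry of numbers: I would convert the problem of counting ideals in a fixed class into a problem of counting lattice points of bounded norm in a suitable region, and then extract the main term and the power-saving error from a Lipschitz-type boundary estimate.

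First I would fix, once and for all, an integral ideal $\bb$ in the inverse class $\mathcal{C}^{-1}$. Multiplication by $\bb$ sets up a clean correspondence: an integral ideal $\aa$ lies in $\mathcal{C}$ exactly when $\aa\bb$ is principal, say $\aa\bb=(\gamma)$, and since $\aa\bb\subseteq\bb$ we then have $\gamma\in\bb$. Conversely every nonzero $\gamma\in\bb$ produces the integral ideal $(\gamma)\bb^{-1}\in\mathcal{C}$, and two elements give the same ideal precisely when they are associates. Since $N((\gamma)\bb^{-1})=|N(\gamma)|/N\bb$, this yields
\[ \#\{\aa\in\mathcal{C}:\ N\aa\le x\} = \#\{(\gamma):\ 0\neq\gamma\in\bb,\ |N(\gamma)|\le x\,N\bb\}, \]
so it suffices to count nonzero $\gamma\in\bb$ of bounded norm up to associates.

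Next I would pass to the Minkowski embedding $\iota\colon K\hookrightarrow\mathbf{R}^{r_1}\times\C^{r_2}\cong\mathbf{R}^{d}$, under which $\bb$ becomes a full lattice $\Lambda$ of covolume $2^{-r_2}N\bb\sqrt{|\Delta|}$. To remove the ambiguity by units, I would invoke Dirichlet's unit theorem together with the logarithmic embedding to carve out a fundamental domain $\mathcal{F}\subseteq\mathbf{R}^{d}$ for the action of $\Z_K^{\times}$ on the nonzero vectors, the $w$ roots of unity contributing an overall factor $1/w$. The task reduces to estimating $\#(\Lambda\cap\mathcal{F}\cap\{|N|\le x\,N\bb\})$, where $N$ is the extended norm form on $\mathbf{R}^d$. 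Because $N$ is homogeneous of degree $d$ and $\mathcal{F}$ is stable under positive scaling, the region $\mathcal{F}\cap\{|N|\le t\}$ is cone-like and has volume exactly $tV$ for a constant $V$ independent of $t$; computing $V$ is where the regulator $R$ enters, as the Jacobian of the logarithmic map, alongside the archimedean factors $2^{r_1+r_2}\pi^{r_2}$. A standard lattice-point count then gives
\[ \#\bigl(\Lambda\cap\mathcal{F}\cap\{|N|\le t\}\bigr) = \frac{tV}{2^{-r_2}N\bb\sqrt{|\Delta|}} + O\!\left(t^{\frac{d-1}{d}}\right), \]
the error arising from the $(d-1)$-dimensional boundary. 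Taking $t=x\,N\bb$, the factors of $N\bb$ cancel (as they must, the answer being independent of $\bb$), and after dividing by $w$ the main term becomes a constant multiple of $x$; unwinding the volume computation identifies this constant as $\Psi/h$, while the error is $O(x^{1-1/d})$.

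The main obstacle is that $\mathcal{F}$ is unbounded, so the naive Lipschitz estimate does not apply verbatim. I would handle this by truncating $\mathcal{F}$, cutting off the region where some archimedean coordinate is atypically large or small, bounding separately the lattice points in the discarded tails, and checking that the truncation can be arranged so that the boundary contribution stays $O(x^{1-1/d})$ uniformly in $x$. The only other delicate point is the exact bookkeeping of the leading constant, namely keeping faithful track of the powers of $2$, the factor $\pi^{r_2}$, the regulator, and the root-of-unity factor $w$ so that the constant really comes out to $\Psi/h$. Since this is precisely the computation underlying the analytic class number formula, I would borrow it rather than reprove it, which is why the result can fairly be called classical.
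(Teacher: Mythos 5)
The paper offers no proof here beyond the citation to Weber, and your sketch is precisely the classical geometry-of-numbers argument underlying that citation: reduce to counting nonzero elements of a fixed ideal $\bb\in\mathcal{C}^{-1}$ up to associates, pass to the Minkowski embedding, and count lattice points of $\bb$ in a cone-shaped fundamental domain for the unit action via a Lipschitz boundary estimate, which is sound (the standard references handle exactly the truncation issue you raise near the locus where the norm form degenerates). The one thing to double-check is the normalization of the leading constant: the textbook computation gives the per-class density as $2^{r_1}(2\pi)^{r_2}R/(w\sqrt{|\Delta|})$, so you should reconcile this with the paper's $\Psi/h$ for $\Psi$ as defined (where the exponent $s$ is presumably $r_2$) rather than simply asserting the constant ``comes out to $\Psi/h$.''
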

\begin{proof} This is due to Weber \cite{weber96}.
\end{proof}

\begin{lem}\label{lem:landau} For each ideal class $\mathcal{C}$ of $\Z_K$, and all $x\ge 3$,
\[ \sum_{\substack{N\pp \le x \\ \pp \in \mathcal{C}}} \frac{1}{|\pp|} = \frac{1}{h} \log_2{x} + O(1).\]
\end{lem}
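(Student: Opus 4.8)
The plan is to detect the condition $\pp \in \mathcal{C}$ via the orthogonality relations for the characters of the class group $\Cl(\Z_K)$, thereby reducing the lemma to character-twisted prime sums ranging over \emph{all} prime ideals. Writing $\widehat{G}$ for the group of characters of $G := \Cl(\Z_K)$, which has order $h$, orthogonality gives $\mathbf{1}[\pp \in \mathcal{C}] = \frac{1}{h}\sum_{\chi \in \widehat{G}} \overline{\chi(\mathcal{C})}\,\chi(\pp)$, so that
\[ \sum_{\substack{N\pp \le x \\ \pp \in \mathcal{C}}} \frac{1}{N\pp} = \frac{1}{h}\sum_{\chi \in \widehat{G}} \overline{\chi(\mathcal{C})} \sum_{N\pp \le x} \frac{\chi(\pp)}{N\pp}. \]
It then suffices to prove that the inner sum is $\log_2 x + O(1)$ when $\chi$ is the trivial character $\chi_0$ and $O(1)$ for every nontrivial $\chi$; summing over the $h$ characters isolates the $\chi_0$ contribution and produces $\frac{1}{h}\log_2 x + O(1)$.

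To estimate the inner sums I would pass to the Hecke $L$-functions $L(s,\chi) = \prod_{\pp}(1 - \chi(\pp) N\pp^{-s})^{-1}$, whose logarithmic derivatives encode $\log N\pp$-weighted prime sums. The prime-power terms (exponent $k \ge 2$) in $-L'/L$ converge absolutely and contribute $O(1)$, so everything hinges on the behavior of $-L'/L(s,\chi)$ near $s = 1$. For $\chi = \chi_0$ we have $L(s,\chi_0) = \zeta_K(s)$, which has a simple pole at $s=1$ (equivalently, this is the analytic content of Weber's Lemma~\ref{lem:weber}); hence $-\zeta_K'/\zeta_K(s) = \frac{1}{s-1} + O(1)$, and the classical Mertens-type argument yields $\sum_{N\pp \le x}\frac{\log N\pp}{N\pp} = \log x + O(1)$. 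For $\chi \ne \chi_0$ the function $L(s,\chi)$ continues analytically across $s = 1$ and, crucially, satisfies $L(1,\chi)\ne 0$; thus $-L'/L(s,\chi)$ is regular at $s=1$ and the same argument gives $\sum_{N\pp \le x}\frac{\chi(\pp)\log N\pp}{N\pp} = O(1)$.

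Finally I would strip off the weight $\log N\pp$ by partial summation. Writing $S_\chi(t) = \sum_{N\pp \le t}\frac{\chi(\pp)\log N\pp}{N\pp}$, one has
\[ \sum_{N\pp \le x}\frac{\chi(\pp)}{N\pp} = \frac{S_\chi(x)}{\log x} + \int_{2}^{x}\frac{S_\chi(t)}{t(\log t)^2}\,\mathrm{d}t. \]
For $\chi = \chi_0$, substituting $S_{\chi_0}(t) = \log t + O(1)$ gives $\log_2 x + O(1)$; for $\chi \ne \chi_0$, the bound $S_\chi(t) = O(1)$ makes both terms $O(1)$. Feeding these back into the orthogonality identity completes the argument.

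The main obstacle is the non-vanishing $L(1,\chi)\ne 0$ for nontrivial $\chi$. For complex characters this is comparatively routine, but the real (order-two) characters demand the delicate Landau--Hecke treatment, exactly mirroring the hard case of Dirichlet's theorem on primes in arithmetic progressions. Since all of these ingredients are classical facts about the Dedekind zeta function and Hecke $L$-functions, in the final write-up it may well be cleanest to invoke Landau's prime ideal theorem for ideal classes directly rather than reproduce the full $L$-function analysis.
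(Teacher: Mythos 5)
Your proposal is correct, and your closing remark converges on exactly what the paper does: its entire proof of Lemma \ref{lem:landau} is a citation of Landau's ideal-class version of the prime ideal theorem with error term \cite[Satz LXXXV]{landau18}, followed by partial summation applied to the counting function of primes in the class $\mathcal{C}$. What you sketch in the body is a genuinely different (and more self-contained) route: you detect the class condition by orthogonality over the characters of $\Cl(\Z_K)$ and prove Mertens-level estimates $\sum_{N\pp\le x}\chi(\pp)(\log N\pp)/N\pp=\log x+O(1)$ or $O(1)$ before stripping the logarithm by partial summation. This buys something real: a Mertens-type bound requires only the simple pole of $\zeta_K$ at $s=1$ and the nonvanishing $L(1,\chi)\neq 0$, which is strictly weaker analytic input than the prime ideal theorem with error term; the cost is that you must actually establish that input rather than quote it. One refinement to your last paragraph: for characters of the class group the nonvanishing at $s=1$ is not as delicate as the real-character case of Dirichlet's theorem. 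Since $\prod_{\chi}L(s,\chi)=\zeta_H(s)$ for the Hilbert class field $H$, and $\zeta_H$ has a (simple) pole at $s=1$ by Weber's counting theorem applied to $H$ (the analogue of Lemma \ref{lem:weber}), while the only factor with a pole is $L(s,\chi_0)=\zeta_K(s)$, no factor can vanish at $s=1$; alternatively one can run Landau's nonnegative-coefficients argument on this product. Either way your argument closes, and both routes prove the lemma.
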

\begin{proof} This follows from Landau's ideal class variant of the prime ideal theorem (with error term) \cite[Satz LXXXV]{landau18}, after partial summation.
\end{proof}

\section{Reduction to a standard Erd\H{o}s-Kac problem}
\subsection{Preliminary anatomical results} We begin by recording two easy consequences of the analytic lemmas recalled in the preceding section. For each $i=1,2,\dots, h$, let $\omega_i(\aa)$ denote the number of distinct prime ideal factors of $\aa$ from $\Cc_i$, and let $\Omega_i(\aa)$ denote the corresponding count with multiplicity. Then $\omega_i$ and $\Omega_i$ are additive functions on $\Id(\Z_K)$, in the sense that $\omega_i(\mathfrak{a}\mathfrak{b}) = \omega_i(\mathfrak{a}) + \omega_i(\mathfrak{b})$ for comaximal ideals $\mathfrak{a}$ and $\mathfrak{b}$, and similarly for $\Omega_i$.

\begin{prop}\label{prop:normal} For each $i=1,2,\dots, h$, and all $x\ge 3$, we have
\[ \sum_{N(\mathfrak{a}) \le x} \left(\omega_i(\mathfrak{a}) -\frac{1}{h}\log_2{x}\right)^2 = O(x\log_2{x}). \]
\end{prop}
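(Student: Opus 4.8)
The plan is to expand the square and estimate the three resulting sums, exploiting that their main terms cancel. Writing
\[ \sum_{N\aa\le x}\left(\omega_i(\aa)-\tfrac1h\log_2 x\right)^2 = \sum_{N\aa\le x}\omega_i(\aa)^2 - \frac{2\log_2 x}{h}\sum_{N\aa\le x}\omega_i(\aa) + \frac{(\log_2 x)^2}{h^2}\sum_{N\aa\le x}1, \]
I would first record $\sum_{N\aa\le x}1 = \Psi x + O(x^{1-1/d})$, obtained by summing Lemma~\ref{lem:weber} over the $h$ classes. For the linear sum I interchange the order of summation via $\omega_i(\aa) = \sum_{\pp\mid\aa,\,\pp\in\Cc_i}1$, so that
\[ \sum_{N\aa\le x}\omega_i(\aa) = \sum_{\substack{\pp\in\Cc_i\\ N\pp\le x}}\#\{\bb:\ N\bb\le x/N\pp\}. \]
Inserting $\#\{\bb: N\bb\le y\} = \Psi y + O(y^{1-1/d})$ and applying Lemma~\ref{lem:landau} to the main term yields $\sum_{N\aa\le x}\omega_i(\aa) = \frac{\Psi x}{h}\log_2 x + O(x)$. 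The accumulated error is controlled by the elementary bound $\sum_{N\pp\le y}(N\pp)^{-(1-1/d)}\ll y^{1/d}$, which holds since all but $O(1)$-many prime ideals below $y$ have prime norm, with at most $d$ above each rational prime.

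The crux is the quadratic sum. Expanding $\omega_i(\aa)^2 = \sum_{\pp,\mathfrak{q}\mid\aa;\ \pp,\mathfrak{q}\in\Cc_i}1$ and separating the diagonal $\pp=\mathfrak{q}$, which contributes exactly $\sum_{N\aa\le x}\omega_i(\aa) = O(x\log_2 x)$, the off-diagonal terms satisfy $\pp\mid\aa$ and $\mathfrak{q}\mid\aa$ if and only if $\pp\mathfrak{q}\mid\aa$, so they contribute
\[ \sum_{\substack{\pp\ne\mathfrak{q};\ \pp,\mathfrak{q}\in\Cc_i\\ N\pp N\mathfrak{q}\le x}}\left(\frac{\Psi x}{N\pp N\mathfrak{q}} + O\Big(\big(x/(N\pp N\mathfrak{q})\big)^{1-1/d}\Big)\right). \]

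I expect the main difficulty to lie here, in two parts. First, I must show the double prime sum $\sum_{\pp\ne\mathfrak{q},\,N\pp N\mathfrak{q}\le x}(N\pp N\mathfrak{q})^{-1}$ equals $\frac{1}{h^2}(\log_2 x)^2 + O(\log_2 x)$. Comparing it to $\big(\sum_{\pp\in\Cc_i,\,N\pp\le x}(N\pp)^{-1}\big)^2 = \frac{1}{h^2}(\log_2 x)^2 + O(\log_2 x)$ (by Lemma~\ref{lem:landau}), the diagonal discrepancy is $O(1)$, and the discrepancy between the constraint $N\pp N\mathfrak{q}\le x$ and the independent constraints $N\pp,N\mathfrak{q}\le x$ is $O(\log_2 x)$: any excluded pair has one prime of norm exceeding $\sqrt x$, and $\sum_{\sqrt x<N\pp\le x}(N\pp)^{-1}=O(1)$ since $\log_2\sqrt x = \log_2 x + O(1)$. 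Second, I must check the error terms sum to $O(x\log_2 x)$; summing over $\mathfrak{q}$ first against $\sum_{N\mathfrak{q}\le x/N\pp}(N\mathfrak{q})^{-(1-1/d)}\ll (x/N\pp)^{1/d}$ and then over $\pp$, the total is $\ll x^{1-1/d}\cdot x^{1/d}\sum_{N\pp\le x}(N\pp)^{-1}\ll x\log_2 x$.

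Assembling the pieces gives $\sum_{N\aa\le x}\omega_i(\aa)^2 = \frac{\Psi x}{h^2}(\log_2 x)^2 + O(x\log_2 x)$. Substituting the three estimates into the expanded square, the leading $\frac{\Psi x}{h^2}(\log_2 x)^2$ terms cancel with signs $1 - 2 + 1 = 0$, while the surviving secondary terms are each $O(x\log_2 x)$, yielding the claimed bound.
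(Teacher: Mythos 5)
Your proof is correct and follows essentially the route the paper intends: the paper disposes of this proposition as a ``straightforward imitation of the classical proofs for $K=\mathbf{Q}$'' in Hardy and Wright, which is exactly the Tur\'an-style second-moment expansion you carry out, with Weber's and Landau's lemmas supplying the number-field input. (One harmless slip in a justification: it is not true that all but $O(1)$ prime ideals below $y$ have prime norm, but your bound $\sum_{N\mathfrak{p}\le y}(N\mathfrak{p})^{-(1-1/d)}\ll y^{1/d}$ still holds because at most $d$ prime ideals share any given norm and $\sum_{n\le y}n^{-(1-1/d)}\ll y^{1/d}$.)
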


\begin{prop}\label{prop:Omeganormal} For each $i=1,2,\dots, h$, and all $x\ge 3$, we have
\[ \sum_{N(\mathfrak{a}) \le x} \left(\Omega_i(\mathfrak{a})-\omega_i(\mathfrak{a})\right) = O(x). \]
\end{prop}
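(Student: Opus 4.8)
The plan is to mimic the classical argument that $\sum_{n \le x}(\Omega(n) - \omega(n)) = O(x)$. The starting observation is that the difference $\Omega_i(\aa) - \omega_i(\aa)$ counts, for each prime $\pp \in \Cc_i$ dividing $\aa$, the excess of its multiplicity over $1$; equivalently,
\[ \Omega_i(\aa) - \omega_i(\aa) = \sum_{\pp \in \Cc_i} \sum_{k \ge 2} [\pp^k \mid \aa], \]
where $[\,\cdot\,]$ is the indicator of the bracketed condition. Summing over $N\aa \le x$ and interchanging the order of summation, I would reduce the estimate to controlling, for each prime power $\pp^k$ with $k \ge 2$, the number of ideals $\aa$ with $N\aa \le x$ that are divisible by $\pp^k$.

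The second step is to count these ideals via the substitution $\aa = \pp^k \bb$: the map $\bb \mapsto \pp^k \bb$ is a bijection between $\{\bb : N\bb \le x/N\pp^{k}\}$ and $\{\aa : N\aa \le x,\ \pp^k \mid \aa\}$. Summing Lemma \ref{lem:weber} over all $h$ ideal classes shows that the number of ideals of norm at most $y$ is $\Psi y + O(y^{1-1/d}) = O(y)$ for $y \ge 1$ (and the count is $0$ for $y < 1$), so in every case
\[ \#\{\aa : N\aa \le x,\ \pp^k \mid \aa\} = O\!\left(\frac{x}{N\pp^{k}}\right). \]

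Collecting the contributions, the sum in question is
\[ O\!\left(x \sum_{\pp} \sum_{k \ge 2} \frac{1}{N\pp^{k}}\right) = O\!\left(x \sum_{\pp} \frac{1}{N\pp\,(N\pp - 1)}\right), \]
having extended the sum over all primes $\pp$ (dropping the restriction to $\Cc_i$ only enlarges it) and summed the geometric series in $k$. The final step is to observe that this prime-ideal sum converges: since $N\pp \ge 2$ always, the summand is $\le 2/N\pp^{2}$, and $\sum_{\pp} N\pp^{-2}$ is bounded by $d \sum_{p} p^{-2}$ because each rational prime $p$ lies below at most $d$ prime ideals, every one of norm at least $p$. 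Hence the whole expression is $O(x)$, as claimed.

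There is no serious obstacle here; the only point requiring a little care is the uniformity of the counting bound $O(x/N\pp^{k})$ across all $\pp$ and $k$ --- in particular, that it remains valid (trivially, as $0$) once $N\pp^{k} > x$ --- so that the interchange of summation and the subsequent termwise estimate are legitimate.
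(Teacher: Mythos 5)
Your proof is correct and is precisely the argument the paper has in mind: the paper proves this proposition only by reference to the classical case $K=\Q$ in Hardy and Wright (their eq.\ (22.11.7)), and your adaptation --- writing $\Omega_i(\aa)-\omega_i(\aa)$ as a sum of indicators of $\pp^k\mid\aa$ with $k\ge 2$, counting multiples of $\pp^k$ via Lemma \ref{lem:weber}, and bounding $\sum_{\pp} 1/(N\pp(N\pp-1))$ by comparison with $\sum_p p^{-2}$ --- is exactly that straightforward imitation, with the uniformity caveat correctly handled.
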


Propositions \ref{prop:normal} and \ref{prop:Omeganormal} follow by a straightforward imitation of the classical proofs for $K=\Q$ (when $h=1$), as found in Hardy and Wright \cite[see eqs. (22.10.1), (22.10.2), and (22.11.7)]{HW08}.

\subsection{The \emph{type} of an irreducible and a decomposition of $\nu(\alpha)$} Let $\pi$ be an irreducible element of $\Z_K$. Suppose that the decomposition of $(\pi)$ into prime ideals takes the form
\begin{equation}\label{eq:pifact} (\pi) = \mathfrak{p}_1 \cdots \mathfrak{p}_g. \end{equation}
The irreducibility of $\pi$ guarantees that no nonempty, proper subsequence of $\mathfrak{p}_1, \dots, \mathfrak{p}_g$ multiplies to a principal ideal. Hence, $g\le D$, the Davenport constant of the class group $\Cl(\Z_K)$. On the other hand, if $\mathfrak{p}_1, \dots, \mathfrak{p}_g$ are prime ideals whose product is principal but no nonempty proper subproduct is principal, then $\mathfrak{p}_1\cdots\mathfrak{p}_g= (\pi)$ for an irreducible $\pi$. Since every ideal class contains prime ideals, one can construct irreducibles $\pi$ with $D$ prime ideal factors (counting multiplicity): choose $\pp_1,\dots,\pp_{D-1}$ having no nontrivial principal subproduct, and choose $\pp_{D}$ so that $\pp_{1}\cdots\pp_{D}$ is principal. Then each generator $\pi$ of $\pp_1\cdots\pp_{D}$ is irreducible. We thus recover Davenport's result that $D$ is the maximal number of prime ideals appearing in the decomposition of an irreducible element of $\Z_K$.

Define the \emph{type} $\tau$ of $\pi$ as the integer tuple $(t_1, \dots, t_h)$, where $t_i$ is the number of $\pp$ in \eqref{eq:pifact} belonging to $\Cc_i$, counted with multiplicity. Let $\mathcal{T}$ denote the set of types $\tau$ that correspond to some irreducible. For each $\tau=(t_1,\dots,t_h)\in \mathcal{T}$, we have $t_1 + \dots + t_h =g \le D$. When $t_1+\dots+t_h=D$, we say $\tau$ is of \emph{maximal length}.

For $\alpha \in \Z_K$ and $\tau \in \mathcal{T}$, we define $\nu_{\tau}(\alpha)$ as the number of distinct nonassociate irreducibles dividing $\alpha$ and having type $\tau$. Thus,
\[ \nu(\alpha) = \sum_{\tau \in \mathcal{T}} \nu_{\tau}(\alpha).\]
We now turn attention to the summands $\nu_{\tau}(\alpha)$.

Specifying a type-$\tau$ irreducible factor of $\alpha$ amounts to making $h$ choices: For each $1\le i \le h$, we must choose $t_i$ prime ideals (not necessarily distinct) from the multiset of prime ideals dividing $\alpha_i$  belonging to the class $\mathcal{C}_i$. Abusing notation somewhat and writing $\omega_i(\alpha)$ for $\omega_i((\alpha))$, and similarly for $\Omega_i$, the number of ways the $i$th choice can be made is bounded below by $\binom{\omega_i(\alpha)}{t_i}$ and bounded above by $\binom{\Omega_i(\alpha)}{t_i}$. Hence,
\begin{equation}\label{eq:nusqueeze} \prod_{i=1}^{h} \binom{\omega_i(\alpha)}{t_i} \le \nu_{\tau}(\alpha) \le \prod_{i=1}^{h}\binom{\Omega_i(\alpha)}{t_i}. \end{equation}

In order to obtain useful estimates from \eqref{eq:nusqueeze}, we will assume that $\alpha$ avoids a small exceptional set. Let $x\ge 3$, and let $\alpha$ be a nonzero element of $\Z_K$ with $|N(\alpha)|\le x$. It is convenient for what follows if $(\alpha)$ satisfies
\begin{enumerate}
\item $|\omega_i(\alpha)-\frac{1}{h}\log_2{x}| < (\log_2{x})^{2/3}$ for all $i=1,2,\dots,h$,
\item $|\Omega_i(\alpha)-\omega_i(\alpha)|<\log_3{x}$ for all $i=1,2,\dots,h$.
\end{enumerate}
Let $\mathcal{E}$ denote the set of principal ideals $(\alpha)$ of norm not exceeding $x$ for which one at least of (i) or (ii) fails. Propositions \ref{prop:normal} and \ref{prop:Omeganormal} imply that
\[ \#\mathcal{E} \ll x/\log_3{x}.\]
In particular, $\mathcal{E}$ makes up asymptotically 0\% of the the principal ideals of norm bounded by $x$, as $x\to\infty$.

Suppose that $(\alpha) \notin \mathcal{E}$. Using (i), we see that
\begin{align} \prod_{i=1}^{h} \binom{\omega_i(\alpha)}{t_i} &= \prod_{i=1}^{h} \left(\frac{\omega_i(\alpha)^{t_i}}{t_i!}\left(1 + O(1/\log_2{x})\right)\right) \notag\\
&= \left(\prod_{i=1}^{h} \frac{\omega_i(\alpha)^{t_i}}{t_i!}\right)\left(1 + O(1/\log_2{x})\right). \label{eq:wlower} \end{align}
On the other hand, (i) and (ii) together imply that $\Omega_i(\alpha)/\omega_i(\alpha)=1 + O(\log_3{x}/\log_2{x})$ for each $i=1,2,\dots, h$. Hence,
\begin{align}\notag \prod_{i=1}^{h} \binom{\Omega_i(\alpha)}{t_i} &= \prod_{i=1}^{h} \left(\frac{\Omega_i(\alpha)^{t_i}}{t_i!}\left(1 + O(1/\log_2{x})\right)\right) \\ &= \left(\prod_{i=1}^{h} \frac{\omega_i(\alpha)^{t_i}}{t_i!}\right)\left(1 + O(\log_3{x}/\log_2{x})\right).\label{eq:Wupper} \end{align}
If $\tau$ is not of maximal length, so that $t_1 + \dots + t_h \le D-1$, we deduce from the upper bound in \eqref{eq:nusqueeze} along with (i) and \eqref{eq:Wupper} that
\[ \nu_{\tau}(\alpha) = O((\log_2{x})^{D-1}). \]
Suppose now that $\tau$ is of maximal length. Then combining \eqref{eq:nusqueeze}, \eqref{eq:wlower}, and \eqref{eq:Wupper} with (i) reveals that
\begin{equation}\label{eq:firstnu} \nu_{\tau}(\alpha) = \prod_{i=1}^{h} \frac{\omega_i(\alpha)^{t_i}}{t_i!} + O((\log_2{x})^{D-1} \log_3{x}). \end{equation}
Write $\omega_i(\alpha) = \frac{1}{h}\log_2{x}\left(1 + \frac{\omega_i(\alpha)-\frac{1}{h}\log_2{x}}{\frac{1}{h}\log_2{x}}\right)$.
Then (keeping in mind (i))
\[ \frac{\omega_i(\alpha)^{t_i}}{t_i!} = \frac{1}{t_i !} \left(\frac{1}{h}\log_2{x}\right)^{t_i} \left(1+t_i \frac{\omega_i(\alpha)-\frac{1}{h}\log_2{x}}{\frac{1}{h}\log_2{x}} + O((\log_2{x})^{-2/3})\right).\]
Inserting this into \eqref{eq:firstnu},
\[ \nu_{\tau}(\alpha) = \left(\prod_{i=1}^{h}\frac{1}{t_i!}\right) \left(\frac{1}{h}\log_2{x}\right)^D\cdot \left(1+\frac{\sum_{j=1}^{h} t_j \omega_j(\alpha) -\frac{D}{h}\log_2{x}}{\frac{1}{h}\log_2{x}} + O((\log_2{x})^{-2/3})\right).\]
Now sum on $\tau\in\mathcal{T}$. To keep track of the components of the various $\tau$, instead of $t_1,\dots,t_h$, we switch notation to $t_{1}(\tau), \dots, t_{h}(\tau)$. Then
\begin{multline} \nu(\alpha) = \Bigg(\sum_{\substack{\tau \in \mathcal{T} \\ \tau\text{ maximal}}} \prod_{i=1}^{h}\frac{1}{t_{i}(\tau)!}\Bigg) \left(\frac{1}{h}\log_2{x}\right)^D \\ + \left(\frac{1}{h}\log_2 x\right)^{D-1} \left(\sum_{j=1}^{h} \Bigg(\sum_{\substack{\tau\in \mathcal{T}\\ \tau \text{ maximal}}} t_j(\tau)\prod_{i=1}^{h}\frac{1}{t_{i}(\tau)!}\Bigg) \omega_j(\alpha) - \Bigg(\frac{D}{h}\sum_{\substack{\tau \in \mathcal{T} \\ \tau\text{ maximal}}}\prod_{i=1}^{h}\frac{1}{t_{i}(\tau)!}\Bigg) \log_2{x}\right)\\
+ O((\log_2{x})^{D-2/3}).\label{eq:longnu} \end{multline}
To continue, for $1\le j\le h$, set
\begin{equation}\label{eq:kappadef} \kappa_j = \sum_{\substack{\tau\in \mathcal{T}\\ \tau \text{ maximal}}} t_j(\tau)\prod_{i=1}^{h}\frac{1}{t_{i}(\tau)!}. \end{equation}
Then
\[ \sum_{j=1}^{h} \kappa_j = \sum_{\substack{\tau\in \mathcal{T}\\ \tau \text{ maximal}}} \prod_{i=1}^{h}\frac{1}{t_{i}(\tau)!} \sum_{j=1}^{h} t_j(\tau) = D \sum_{\substack{\tau \in \mathcal{T} \\ \tau\text{ maximal}}} \prod_{i=1}^{h}\frac{1}{t_{i}(\tau)!}. \]

In the next section, we will prove the following Erd\H{o}s--Kac type result for certain additive functions $f$ on $\Id(\Z_K)$, restricted to $\Prin(\Z_K)$.

\begin{thm}\label{thm:EKstandard} Let $\kappa_1,\dots, \kappa_h$ be nonnegative constants, not all of which vanish. For nonzero $\alpha \in \Z_K$, let
\[ f(\alpha) = \sum_{j=1}^{h} \kappa_j \omega_j(\alpha). \]
As $x\to\infty$, the quantity
\[ \frac{f(\alpha)-\left(\frac{1}{h}\sum_{j=1}^{h} \kappa_j\right)\log_2{x}}{\sqrt{(\frac{1}{h}\sum_{j=1}^{h}\kappa_j^2)\log_2{x}}}, \] considered as a random variable on the space of nonzero principal ideals $(\alpha)$ of norm $\le x$ (with the uniform measure),  converges in law to a standard normal  distribution.
\end{thm}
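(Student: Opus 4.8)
The plan is to prove convergence in law by the method of moments. Since the standard normal law is determined by its moments, it suffices to show that, for every fixed integer $k \ge 0$, the $k$th moment of the displayed random variable tends to $M_k$, where $M_k = 0$ for $k$ odd and $M_k = (k-1)(k-3)\cdots 3\cdot 1$ for $k$ even. Writing $\mu = \frac1h\sum_{j=1}^h \kappa_j$, $\sigma^2 = \frac1h\sum_{j=1}^h \kappa_j^2$, and $L = \log_2 x$, this is the assertion that
\[ \frac{1}{\#\{(\alpha): 0<|N(\alpha)|\le x\}}\sum_{0<|N(\alpha)|\le x}\bigl(f(\alpha)-\mu L\bigr)^k = (\sigma^2 L)^{k/2}\,M_k + o\bigl(L^{k/2}\bigr). \]
I would prove this by adapting the moment method of Granville and Soundararajan \cite{GS07} to the present setting; the two features requiring care are the restriction to $\Prin(\Z_K)$ and the class-dependent weights $\kappa_j$.

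The observation that neutralizes the restriction to principal ideals is the following consequence of Lemma \ref{lem:weber}. If $\dd$ is any squarefree integral ideal, then writing $(\alpha)=\dd\bb$ shows that $\dd\mid(\alpha)$ forces $\bb$ to lie in the inverse class $[\dd]^{-1}$; counting such $\bb$ of norm $\le x/N\dd$ gives
\[ \#\{(\alpha): 0<|N(\alpha)|\le x,\ \dd\mid(\alpha)\} = \frac{\Psi}{h}\,\frac{x}{N\dd} + O\bigl((x/N\dd)^{1-1/d}\bigr). \]
Since $\#\{(\alpha): 0<|N(\alpha)|\le x\} = \frac{\Psi x}{h}+O(x^{1-1/d})$, the \emph{proportion} of principal ideals divisible by $\dd$ is $1/N\dd$ up to a small error, exactly as for rational integers and independent of the class of $\dd$. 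Thus principal ideals behave like all ideals at the level of divisibility densities, and the class group re-enters only through Lemma \ref{lem:landau}, when the weighted prime sums are evaluated.

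With $k$ fixed, I would truncate at $y=x^{1/(2k)}$ and split $f = f_y + (f-f_y)$, where $f_y(\alpha)=\sum_{\pp\mid\alpha,\,N\pp\le y}\kappa_{i(\pp)}$ and $i(\pp)$ is the index with $\pp\in\Cc_{i(\pp)}$. Because $\log_2 y = L + O(1)$, Lemma \ref{lem:landau} shows that the primes of norm in $(y,x]$ contribute only $O(1)$ to the mean and to the variance of $f$ over principal ideals; a second-moment estimate then gives that $(f-f_y)$, recentered, has mean square $O(1)=o(L)$, so $f$ may be replaced by $f_y$ without affecting the limiting distribution. The purpose of truncating at this height is that every squarefree product $\dd$ of at most $k$ primes of norm $\le y$ obeys $N\dd\le y^k=x^{1/2}$, which renders the Weber error term in the count above negligible relative to its main term, uniformly over the ideals $\dd$ that arise in the expansion.

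It remains to compute the moments of $f_y$. Expanding $(f_y(\alpha)-\mu L)^k$, summing over principal $(\alpha)$, and interchanging the order of summation reduces everything to sums over $k$-tuples of small primes of the density $1/N\dd$, with $\dd$ the squarefree product of the distinct primes appearing in the tuple. Feeding in Lemma \ref{lem:landau} in the weighted forms $\sum_{N\pp\le y}\kappa_{i(\pp)}/N\pp = \mu L + O(1)$ and $\sum_{N\pp\le y}\kappa_{i(\pp)}^2/N\pp = \sigma^2 L + O(1)$, the Granville--Soundararajan bookkeeping shows that the leading term arises solely from those tuples whose $k$ slots pair into $k/2$ blocks carrying equal primes: each of the $(k-1)(k-3)\cdots 1$ such pairings contributes $(\sigma^2 L)^{k/2}$, while all remaining configurations---and every configuration when $k$ is odd---contribute $O(L^{(k-1)/2})$. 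This yields the displayed moment asymptotics and hence the theorem. The step I expect to be the main obstacle is the uniform control of the Weber error across the entire moment expansion; this is exactly what dictates the truncation height $y=x^{1/(2k)}$ and what must be checked so that the truncation perturbs neither the mean nor the variance. Once the density $1/N\dd$ and the weighted prime-sum asymptotics are in hand, the combinatorial extraction of the pairing terms is routine, being identical to the classical argument with $1/p$ replaced by $\kappa_{i(\pp)}/N\pp$.
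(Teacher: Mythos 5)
Your proposal is correct and follows essentially the same route as the paper: the method of moments in the style of Granville--Soundararajan, with Weber's theorem (Lemma \ref{lem:weber}) supplying the density $1/N\dd$ of principal ideals divisible by a squarefree $\dd$, Landau's theorem (Lemma \ref{lem:landau}) evaluating the weighted prime sums, and the pairing combinatorics among $k$-tuples of small primes producing the Gaussian moments. The only differences are matters of implementation: the paper truncates at $x^{1/(2dk)}$ so that the Weber error terms can be bounded crudely, whereas your choice $y=x^{1/(2k)}$ works but requires summing the errors $(x/N\dd)^{1-1/d}$ using their decay in $N\dd$ rather than the trivial bound $x^{1-1/d}$ per tuple; and the paper disposes of the primes of norm above the truncation point by the pointwise observation that $(\alpha)$ has only $O(1)$ such prime ideal factors, rather than by a second-moment estimate.
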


Theorem \ref{thm:main} follows easily from Theorem \ref{thm:EKstandard}. Indeed, from \eqref{eq:longnu}, we have that when $(\alpha)\notin \mathcal{E}$,
\begin{multline*} \frac{\nu(\alpha) - \left(\sum_{\substack{\tau \in \mathcal{T} \\ \tau\text{ maximal}}} \prod_{i=1}^{h}\frac{1}{t_{i}(\tau)!}\right) \left(\frac{1}{h}\log_2{x}\right)^D}{(\frac{1}{h}\log_2{x})^{D-1} \sqrt{(\frac{1}{h}\sum_{j=1}^{h}\kappa_j^2)\log_2{x}}} \\ = \frac{\sum_{j=1}^{h}\kappa_j \omega_j(\alpha) - \left(\frac{1}{h}\sum_{j=1}^{h}\kappa_j\right)\log_2{x}}{\sqrt{(\frac{1}{h}\sum_{j=1}^{h}\kappa_j^2)\log_2{x}}}+O((\log_2{x})^{-1/6}).\end{multline*}
Since only $o(x)$ ideals $(\alpha)$ land in $\mathcal{E}$, and $(\log_2{x})^{-1/6} = o(1)$, Theorem \ref{thm:EKstandard} implies Theorem \ref{thm:main} with
\[ A = \frac{1}{h^D} \sum_{\substack{\tau \in \mathcal{T} \\ \tau\text{ maximal}}} \prod_{i=1}^{h}\frac{1}{t_{i}(\tau)!} \qquad \text{i.e.},\qquad A = \frac{1}{D h^D} \sum_{j=1}^{h} \kappa_j,\]
and
\[ B = \frac{1}{h^{D-1/2}} \sqrt{\sum_{j=1}^{h} \kappa_j^2}.\]

\begin{example}[Calculation of $A$ and $B$ when the class group is cyclic; cf. {\cite[\S4]{BORS05}}] Suppose that $\Cl(\Z_K)$ is a cyclic group of order $h$. Then $D=h$. (See \cite[\S9.1]{narkiewicz04} for basic facts about Davenport constants.) We suppose the ideal classes are numbered so that, under a fixed isomorphism of $\Cl(\Z_K)$ with $\Z/h\Z$, the class $\mathcal{C}_i$ corresponds to $i\bmod{h}$. Then there are $\phi(h)$ types $\tau$ of maximum length, namely $(0,,\dots,0,h,0,\dots,0)$, where the allowed positions for `$h$' are precisely the units mod $h$ (compare with \cite[Corollary 2.1.4, p. 24]{GR09}). From \eqref{eq:kappadef}, we see that $\kappa_j = \frac{1}{(h-1)!}$ when $\gcd(j,h)=1$ and $\kappa_j=0$ otherwise. After a bit of algebra, we arrive at
\[ A = \frac{1}{h^{h} h!} \phi(h) \qquad\text{and} \qquad B= \frac{1}{h^{h} h!} (h^3 \phi(h))^{1/2}, \]
as claimed in the introduction.
\end{example}

\section{Proof of Theorem \ref{thm:EKstandard}}
To prove Theorem \ref{thm:EKstandard}, we follow very closely the approach to the Erd\H{o}s--Kac theorem detailed by Granville and Soundararajan \cite{GS07}. By the method of moments (the Fr\'echet-Shohat theorem), to prove Theorem \ref{thm:EKstandard} it is sufficient to establish the following estimates.

\begin{prop}\label{prop:moments} Let $k$ be a fixed positive integer. Suppose that $x$ is sufficiently large. If $k$ is even, then
\begin{multline*} \sum_{\substack{(\alpha) \\ 0 <|N(\alpha)| \le x}} \left(f(\alpha)-\bigg(\frac{1}{h}\sum_{j=1}^{k}\kappa_j\bigg)\log_2{x}\right)^k \\ = \frac{\Psi x}{h} \cdot 	\frac{k!}{2^{k/2} \cdot \frac{k}{2}!}  \left(\frac{1}{h}\sum_{j=1}^{h}\kappa_j^2\right)^{k/2}(\log_2{x})^{k/2} + O(x (\log_2{x})^{\frac{k-1}{2}}). \end{multline*}
If $k$ is odd, then
\[ \sum_{\substack{(\alpha)\\ 0 <|N(\alpha)| \le x}} \left(f(\alpha)-\bigg(\frac{1}{h}\sum_{j=1}^{k}\kappa_j\bigg)\log_2{x}\right)^k = O((\log_{2}x)^{\frac{k-1}{2}}). \]
Here the implied constants may depend not only on $K$ but also on $k$ and the $\kappa_j$.
\end{prop}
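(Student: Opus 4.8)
The plan is to run the moment method of Granville--Soundararajan, treating the divisibility events $\pp \mid \alpha$ as approximately independent Bernoulli trials as $(\alpha)$ ranges over the principal ideals of norm at most $x$. The arithmetic input is the following consequence of Lemma~\ref{lem:weber}: for \emph{distinct} prime ideals $\mathfrak{q}_1, \dots, \mathfrak{q}_s$, a principal ideal $(\alpha)$ is divisible by $\mathfrak{q}_1\cdots\mathfrak{q}_s$ exactly when $(\alpha) = \mathfrak{q}_1\cdots\mathfrak{q}_s\,\bb$ with $\bb$ confined to the single class $[\mathfrak{q}_1\cdots\mathfrak{q}_s]^{-1}$, so that
\[ \#\{(\alpha): 0<|N(\alpha)|\le x,\ \mathfrak{q}_1\cdots\mathfrak{q}_s\mid\alpha\} = \frac{\Psi x}{h}\prod_{i=1}^{s}\frac{1}{N\mathfrak{q}_i} + O\bigl((x/\prod_i N\mathfrak{q}_i)^{1-1/d}\bigr). \]
Writing $c(\pp)=\kappa_i$ for $\pp\in\Cc_i$, we have $f(\alpha)=\sum_{\pp\mid\alpha}c(\pp)$, and Lemma~\ref{lem:landau} gives $\sum_{N\pp\le x}c(\pp)/N\pp = \mu + O(1)$ with $\mu := \bigl(\frac1h\sum_{j}\kappa_j\bigr)\log_2 x$. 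Hence, putting $X_\pp(\alpha)=\mathbf{1}\{\pp\mid\alpha\}-1/N\pp$, we have $f(\alpha)-\mu = \sum_{N\pp\le x}c(\pp)X_\pp(\alpha)+O(1)$.

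First I would truncate to small primes. Fix $y=x^{1/(2k)}$ and split $f(\alpha)-\mu = P(\alpha)+E(\alpha)$, where $P(\alpha)=\sum_{N\pp\le y}c(\pp)X_\pp(\alpha)$. The virtue of this $y$ is that any prime ideal of norm exceeding $y$ that divides $\alpha$ contributes a factor larger than $x^{1/(2k)}$ to $N(\alpha)\le x$, so at most $2k$ of them occur; consequently the large-prime tail of $f$ is bounded, and $E(\alpha)=O(1)$ uniformly in $\alpha$ (the leftover constants, including $\mu-\sum_{N\pp\le y}c(\pp)/N\pp = O(1)$, are likewise bounded by Lemma~\ref{lem:landau}). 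Expanding $(f-\mu)^k=(P+E)^k$ binomially and using $|E|\ll 1$, every term carrying at least one factor of $E$ is $\ll \sum_\alpha |P(\alpha)|^{k-1}$; since the even moments $\sum_\alpha P^{2m}\ll x(\log_2 x)^m$ (a byproduct of the computation below) yield, via Cauchy--Schwarz, $\sum_\alpha|P|^{j}\ll x(\log_2 x)^{j/2}$ for all $j\le k$, these terms are all $\ll x(\log_2 x)^{(k-1)/2}$. It therefore suffices to evaluate $\sum_{(\alpha)}P(\alpha)^k$.

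To do so, expand $P^k=\sum_{\pp_1,\dots,\pp_k}\prod_i c(\pp_i)\prod_i X_{\pp_i}(\alpha)$ and group each tuple by its distinct primes $\mathfrak{q}_1,\dots,\mathfrak{q}_r$ with multiplicities $a_1,\dots,a_r$ (so $\sum_j a_j=k$). Using $\mathbf{1}\{\mathfrak{q}\mid\alpha\}^2=\mathbf{1}\{\mathfrak{q}\mid\alpha\}$ together with the Weber count above, the inner sum $\sum_\alpha\prod_j X_{\mathfrak{q}_j}^{a_j}$ has main term $\frac{\Psi x}{h}\prod_j M(\mathfrak{q}_j,a_j)$, where $M(\mathfrak{q},a)=\frac{1}{N\mathfrak{q}}(1-\frac1{N\mathfrak{q}})^a+(1-\frac1{N\mathfrak{q}})(-\frac1{N\mathfrak{q}})^a$ plays the role of the expectation of $X_\mathfrak{q}^a$. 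Crucially $M(\mathfrak{q},1)=0$ identically, so any configuration possessing a multiplicity-one prime makes no contribution to the main term; only the configurations with every $a_j\ge 2$ survive, and these have $r\le\lfloor k/2\rfloor$. Since $M(\mathfrak{q},a)=1/N\mathfrak{q}+O(1/N\mathfrak{q}^2)$ once $a\ge2$, Lemma~\ref{lem:landau} renders each surviving factor as $\sum_{N\mathfrak{q}\le y}c(\mathfrak{q})^{a_j}M(\mathfrak{q},a_j)=\bigl(\frac1h\sum_i\kappa_i^{a_j}\bigr)\log_2 x+O(1)$, i.e.\ one power of $\log_2 x$ per distinct prime. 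The total power of $\log_2 x$ is thus $r\le\lfloor k/2\rfloor$: for odd $k$ this never exceeds $(k-1)/2$, which already gives the bound $O(x(\log_2 x)^{(k-1)/2})$; for even $k$ the top-order term arises solely from $r=k/2$ with all $a_j=2$. In that case the coordinates pair up, the number of perfect matchings of $\{1,\dots,k\}$ is $\frac{k!}{2^{k/2}(k/2)!}$, and each pair independently contributes $\sum_{\mathfrak{q}}c(\mathfrak{q})^2 M(\mathfrak{q},2)=\bigl(\frac1h\sum_i\kappa_i^2\bigr)\log_2 x+O(1)$ (imposing distinctness of the paired primes costs only a diagonal term of size $O((\log_2 x)^{k/2-1})$). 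Multiplying out reproduces precisely the asserted main term $\frac{\Psi x}{h}\cdot\frac{k!}{2^{k/2}(k/2)!}\bigl(\frac1h\sum_i\kappa_i^2\bigr)^{k/2}(\log_2 x)^{k/2}$.

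The step I expect to be the main obstacle is controlling the accumulated error terms coming from Lemma~\ref{lem:weber}. Summed over a configuration whose divisibility constraint involves $s$ primes, these errors total $\ll x^{1-1/d}\bigl(\sum_{N\mathfrak{q}\le y}(N\mathfrak{q})^{-(1-1/d)}\bigr)^{s}$, and the inner sum is a positive power of the truncation level; with no truncation (primes up to $x$) this would swamp the main term. This is exactly why the cutoff $y=x^{1/(2k)}$ is forced upon us: then $\prod_j N\mathfrak{q}_j\le y^k=x^{1/2}$, and a crude bound $\sum_{N\mathfrak{q}\le y}(N\mathfrak{q})^{-(1-1/d)}\ll y^{1/d}$ makes the entire error $\ll x^{1-1/d}\,y^{k/d}=x^{1-1/(2d)}$, a genuine power saving. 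Collecting the surviving main term from the previous paragraph, the binomial remainder $\ll x(\log_2 x)^{(k-1)/2}$, the $O(1)$ slacks from Lemma~\ref{lem:landau}, and the diagonal corrections $\ll x(\log_2 x)^{k/2-1}$, all of which lie within the stated tolerance, yields the Proposition; the odd-$k$ estimate follows a fortiori, since no configuration attains the top power of $\log_2 x$.
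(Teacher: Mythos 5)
Your proposal is correct and follows essentially the same route as the paper: truncate to prime ideals of small norm, center the indicators ($X_\pp$ is exactly the paper's $g_\pp$), compute the $k$th moment via Weber's and Landau's theorems, and extract the main term from the perfect-matching configurations where every prime occurs with multiplicity exactly $2$. The only differences are cosmetic --- you take the cutoff $x^{1/(2k)}$ rather than $x^{1/(2dk)}$ and compensate with a slightly sharper accounting of the Weber error terms, and you expand the product of centered indicators directly instead of via the gcd/M\"obius device --- and your stated error $O(x(\log_2 x)^{(k-1)/2})$ in the odd case supplies the factor of $x$ evidently omitted by typo in the Proposition as printed.
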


Proposition \ref{prop:moments} will be deduced from the following technical lemma. For a nonzero prime ideal $\pp$ of $\Z_K$, we set $\kappa_{\pp} = \kappa_j$, where $j$ is that index for which $\pp \in \Cc_j$.

\begin{lem}\label{lem:technical} For each nonzero prime ideal $\mathfrak{p}$ of $\Z_K$, and each nonzero ideal $\mathfrak{a}$ of $\Z_K$, set
\[ g_\pp(\mathfrak{a}) = \begin{cases} 1-\frac{1}{N\mathfrak{p}}&\text{if $\mathfrak{p} \mid \mathfrak{a}$},\\
-\frac{1}{N\pp} &\text{if $\mathfrak{p}\nmid\mathfrak{a}$}.\end{cases} \]
Fix a positive integer $k$. Suppose $x$ is sufficiently large, and let $z= x^{\frac{1}{2dk}}$. If $k$ is even, then
\[ \sum_{\substack{(\alpha)\\ 0<|N(\alpha)| \le x}} \bigg(\sum_{N\pp \le z} \kappa_{\pp} g_{\pp}(\alpha)\bigg)^k = \frac{\Psi x}{h} \cdot 	\frac{k!}{2^{k/2} \cdot \frac{k}{2}!}  \left(\frac{1}{h}\sum_{j=1}^{h}\kappa_j^2\right)^{k/2}(\log_2{x})^{k/2} + O(x (\log_2{x})^{\frac{k}{2}-1}). \]
If $k$ is odd, then
\[ \sum_{\substack{(\alpha)\\ 0 < |N(\alpha)| \le x}} \bigg(\sum_{N\pp \le z} \kappa_{\pp} g_{\pp}(\alpha)\bigg)^k = O((\log_{2}x)^{\frac{k-1}{2}}).\]
Here the implied constants may depend on $K$, $k$, and the $\kappa_j$.
\end{lem}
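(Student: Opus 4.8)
The plan is to follow the moment computation of Granville--Soundararajan, taking advantage of the fact that each $g_\pp$ is a centered indicator of divisibility by $\pp$. Expanding the $k$th power and interchanging summation gives
\[ \sum_{\substack{(\alpha)\\ 0<|N(\alpha)|\le x}} \bigg(\sum_{N\pp \le z}\kappa_\pp g_\pp(\alpha)\bigg)^k = \sum_{\substack{\pp_1,\dots,\pp_k\\ N\pp_j \le z}} \kappa_{\pp_1}\cdots\kappa_{\pp_k} \sum_{\substack{(\alpha)\\ 0<|N(\alpha)|\le x}} g_{\pp_1}(\alpha)\cdots g_{\pp_k}(\alpha), \]
so everything reduces to the inner sum for a fixed tuple $(\pp_1,\dots,\pp_k)$. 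Collecting equal entries, let $\mathfrak{q}_1,\dots,\mathfrak{q}_r$ be the distinct primes occurring, with multiplicities $a_1,\dots,a_r$; then $a_1+\dots+a_r=k$ and $g_{\pp_1}(\alpha)\cdots g_{\pp_k}(\alpha)=\prod_{i=1}^r g_{\mathfrak{q}_i}(\alpha)^{a_i}$.

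To evaluate the inner sum I would start from the counting fact that, for any integral ideal $\dd$, the map $(\alpha)\mapsto(\alpha)\dd^{-1}$ is a bijection between principal ideals of norm $\le x$ divisible by $\dd$ and integral ideals of norm $\le x/N\dd$ in the class $[\dd]^{-1}$; Lemma~\ref{lem:weber} then gives their number as $\frac{\Psi x}{h\,N\dd}+O((x/N\dd)^{1-1/d})$. Since $g_\pp(\alpha)^a=(-\tfrac{1}{N\pp})^a+\big((1-\tfrac{1}{N\pp})^a-(-\tfrac{1}{N\pp})^a\big)\mathbf{1}[\pp\mid(\alpha)]$, expanding $\prod_{i=1}^r g_{\mathfrak{q}_i}^{a_i}$ into $2^r$ indicator sums and applying this count term-by-term yields
\[ \sum_{\substack{(\alpha)\\ 0<|N(\alpha)|\le x}} \prod_{i=1}^r g_{\mathfrak{q}_i}(\alpha)^{a_i} = \frac{\Psi x}{h}\prod_{i=1}^r E_{\mathfrak{q}_i}^{(a_i)} + O(x^{1-1/d}), \]
where $E_\pp^{(a)}:=\frac{1}{N\pp}\big(1-\tfrac{1}{N\pp}\big)^a+\big(1-\tfrac{1}{N\pp}\big)\big(-\tfrac{1}{N\pp}\big)^a$ is the local $a$th moment of $g_\pp$. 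A short computation isolates the two facts that drive the argument: $E_\pp^{(1)}=0$ exactly, while $E_\pp^{(a)}=\frac{1}{N\pp}+O(\frac{1}{(N\pp)^2})$ for every $a\ge 2$.

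It remains to sum over tuples. Because $E_\pp^{(1)}=0$, any tuple in which some prime occurs with multiplicity exactly $1$ contributes nothing to the main term, leaving only the $O(x^{1-1/d})$ error; as there are at most $z^k=x^{1/(2d)}$ tuples, the choice $z=x^{1/(2dk)}$ makes these errors total $O(x^{1-1/(2d)})$, which is negligible. For the remaining tuples every $a_i\ge 2$, forcing $r\le k/2$. When $k$ is even the extremal case $r=k/2$ (all $a_i=2$) dominates: each such tuple is encoded by a perfect matching of the $k$ positions into pairs carrying a common prime together with an assignment of distinct primes to the $\frac{k}{2}$ pairs, and there are $(k-1)!!=\frac{k!}{2^{k/2}(k/2)!}$ such matchings. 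Using $E_\mathfrak{q}^{(2)}=\frac{1}{N\mathfrak{q}}+O(\frac{1}{(N\mathfrak{q})^2})$ and Lemma~\ref{lem:landau} in the form $\sum_{N\pp\le z}\kappa_\pp^2/N\pp=\frac{1}{h}\big(\sum_j\kappa_j^2\big)\log_2 x+O(1)$, the sum of $\prod_i\kappa_{\mathfrak{q}_i}^2 E_{\mathfrak{q}_i}^{(2)}$ over ordered tuples of distinct primes equals $\big(\frac{1}{h}\sum_j\kappa_j^2\big)^{k/2}(\log_2 x)^{k/2}+O((\log_2 x)^{k/2-1})$, producing the claimed main term. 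Every other admissible pattern has $r\le k/2-1$ and so contributes $O(x(\log_2 x)^{k/2-1})$; relaxing the distinctness constraint and replacing $E^{(2)}$ by $1/N\pp$ cost only the same order. When $k$ is odd, no admissible tuple can reach $r=k/2$, so $r\le(k-1)/2$ throughout and the whole sum is $O(x(\log_2 x)^{(k-1)/2})$.

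The main obstacle is not any single estimate but the uniform control of errors: the per-tuple error $O(x^{1-1/d})$ has to be summed over roughly $z^k$ tuples without swamping the main term, which is precisely what the deliberately small exponent in $z=x^{1/(2dk)}$ arranges, and one must verify that passing from $\log_2 z$ to $\log_2 x$, discarding the lower-multiplicity patterns, and dropping the distinctness constraint each stay within $O(x(\log_2 x)^{k/2-1})$. The combinatorial crux is recognizing perfect matchings as the only configurations that survive at top order; this is what manufactures the Gaussian moment constant $(k-1)!!=\frac{k!}{2^{k/2}(k/2)!}$ and, through the Fr\'echet--Shohat theorem, the normal limit law of Theorem~\ref{thm:EKstandard}.
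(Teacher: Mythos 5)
Your proposal is correct and follows essentially the same route as the paper: expand the $k$th power, evaluate $\sum_{(\alpha)}\prod_i g_{\mathfrak{q}_i}(\alpha)^{a_i}$ via Weber's count of ideals in a fixed class (your affine-in-indicators expansion yields exactly the paper's local factor $G(\mathfrak{r})=\prod_i E^{(a_i)}_{\mathfrak{q}_i}$, which the paper reaches instead by conditioning on $\gcd((\alpha),\mathfrak{R})$ and M\"obius inversion), note that the vanishing first moment kills all non-squarefull patterns, and extract the matching count $k!/(2^{k/2}(k/2)!)$ from the $r=k/2$ configurations. One small remark: for odd $k$ you prove the bound $O(x(\log_2{x})^{\frac{k-1}{2}})$, which is also what the paper's own argument establishes; the factor of $x$ is evidently omitted from the displayed statement by a typo.
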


\begin{proof}[Deduction of Proposition \ref{prop:moments} from Lemma \ref{lem:technical}] For each $j$, let $\omega_j(\alpha;z)$ denote the number of prime ideal factors $\pp$ of $(\alpha)$ with $\pp \in \Cc_j$ and $N\pp \le z$. Using Lemma \ref{lem:landau}, we see that
\begin{align*} \sum_{N\pp \le z} \kappa_\pp g_\pp(\alpha) = \sum_{\substack{\pp \mid \alpha \\ N\pp \le z}} \kappa_\pp - \sum_{N\pp \le z}\frac{\kappa_\pp}{N\pp} &= \sum_{j=1}^{h} \kappa_j \omega_j(\alpha;z) - \sum_{j=1}^{h} \kappa_j\left(\frac{1}{h}\log_{2}z + O(1)\right) \\
&= f(\alpha) - \bigg(\frac{1}{h}\sum_{j=1}^{h}\kappa_j\bigg)\log_2{x} + O(1).\end{align*}
To go from the first line to the second, we used that $\log_2{z} = \log_2{x}+O(1)$ and that $(\alpha)$ can have only $O(1)$ prime ideal factors of norm exceeding $z$.
We deduce that
\[ \left(f(\alpha) - \bigg(\frac{1}{h}\sum_{j=1}^{h}\kappa_j\bigg)\log_2{x}\right)^k = \left(\sum_{N\pp \le z} \kappa_\pp g_\pp(\alpha)\right)^k + O\left(\sum_{\ell=0}^{k-1}\bigg|\sum_{N\pp\le z}\kappa_p g_\pp(\alpha)\bigg|^{\ell}\right).\]
Sum both sides over principal ideals $(\alpha)$ with $0 < |N(\alpha)| \le x$. To estimate the main term on the right-hand side, we may appeal to Lemma \ref{lem:technical}. We can also use Lemma \ref{lem:technical} to see that the even values of $\ell$ make an acceptable contribution to the error term. If $\ell$ is odd, we use Cauchy--Schwarz to deduce that
\begin{multline*} \sum_{(\alpha):~0 < |N(\alpha)|\le x} \bigg|\sum_{N\pp\le z}\kappa_p g_\pp(\alpha)\bigg|^{\ell} \\ \le \left(\sum_{(\alpha):~0 < |N(\alpha)|\le x} \bigg|\sum_{N\pp\le z}\kappa_p g_\pp(\alpha)\bigg|^{\ell-1}\right)^{1/2} \left(\sum_{(\alpha):~0 < |N(\alpha)|\le x} \bigg|\sum_{N\pp\le z}\kappa_p g_\pp(\alpha)\bigg|^{\ell+1}\right)^{1/2}.\end{multline*}
Appealing once more to Lemma \ref{lem:technical}, we find that the contribution of the odd $\ell$ also fits within the $O$-term claimed in Proposition \ref{prop:moments}.
\end{proof}

\begin{proof}[Proof of Lemma \ref{lem:technical}] If $\mathfrak{r} = \prod_{i} \pp_i^{e_i}$, put $g_{\mathfrak{r}}(\mathfrak{a}) = \prod_{i} g_{\pp}(\mathfrak{a})^{e_i}$. Then
\begin{equation}\label{eq:initial} \sum_{\substack{(\alpha)\\ 0 < |N(\alpha)|\le x}} \bigg(\sum_{N\pp\le z}\kappa_p g_\pp(\alpha)\bigg)^{k} = \sum_{\substack{\pp_1, \dots, \pp_k\\\text{each $N\pp_i \le z$}}} \kappa_{\pp_1} \cdots \kappa_{\pp_k} \sum_{\substack{(\alpha)\\ 0 < |N(\alpha)|\le x}} g_{\pp_1 \cdots \pp_k}(\alpha). \end{equation}
To proceed, we consider more generally sums of the form \[ \sum_{\substack{(\alpha)\\ 0 < |N(\alpha)|\le x}} g_{\mathfrak{r}}(\alpha),\]
for any $\mathfrak{r}$ with $N\mathfrak{r} \le z^k$. Write $\mathfrak{r}=\mathfrak{q}_1^{e_1} \cdots \mathfrak{q}_s^{e_s}$, where the $\mathfrak{q}_i$ are distinct prime ideals. Put $\mathfrak{R} = \mathfrak{q}_1 \cdots \mathfrak{q}_s$. If $\mathfrak{d} = \gcd((\alpha), \mathfrak{R})$, then $g_{\mathfrak{r}}(\alpha) = g_{\mathfrak{r}}(\mathfrak{d})$. Hence,
\begin{equation}\label{eq:complicatedinnersum} \sum_{\substack{(\alpha)\\ 0 < |N(\alpha)|\le x}} g_{\mathfrak{r}}(\alpha) = \sum_{\mathfrak{d} \mid \mathfrak{R}} g_{\mathfrak{r}}(\mathfrak{d})\sum_{\substack{(\alpha)\\ 0 < |N(\alpha)|\le x \\ \gcd((\alpha),\mathfrak{R}) = \mathfrak{d}}} 1.\end{equation}
We turn attention to the right-hand inner sum. Observe that $\gcd((\alpha),\mathfrak{R})=\mathfrak{d}$ precisely when $\mathfrak{d} \mid \alpha$ and $\alpha \mathfrak{d}^{-1}$ and $\mathfrak{R} \mathfrak{d}^{-1}$ are coprime. Thus, thinking of $\bb = \alpha \mathfrak{d}^{-1}$, the inner sum equals
\[ \sum_{\substack{\bb:~N\bb \le x/N\mathfrak{d}\\ [\mathfrak{b}] = [\mathfrak{d}]^{-1} \\ \gcd(\mathfrak{b}, \mathfrak{R}\mathfrak{d}^{-1}) = 1}} 1. \]
(Here and below, $[\cdot]$ denotes the image of an ideal in the class group $\Cl(\Z_K)$.)
Letting $\mu$ denote the M\"{o}bius function on $\Id(\Z_K)$,
\[ \sum_{\substack{\bb:~N\bb \le x/N\mathfrak{d}\\ [\mathfrak{b}] = [\mathfrak{d}]^{-1} \\ \gcd(\mathfrak{b}, \mathfrak{R}\mathfrak{d}^{-1}) = 1}} 1 =
\sum_{\substack{\bb:~N\bb \le x/N\mathfrak{d}\\ [\mathfrak{b}] = [\mathfrak{d}]^{-1}}} \sum_{\substack{\mathfrak{e} \mid \mathfrak{R} \mathfrak{d}^{-1} \\ \mathfrak{e} \mid \mathfrak{b}}} \mu(\mathfrak{e}) = \sum_{\mathfrak{e} \mid \mathfrak{R} \mathfrak{d}^{-1}} \mu(\mathfrak{e}) \sum_{\substack{\bb:~N\bb \le x/N\mathfrak{d}\\ [\mathfrak{b}] = [\mathfrak{d}]^{-1} \\ \mathfrak{e} \mid \mathfrak{b}}} 1.
\]
Writing $\mathfrak{b}=\mathfrak{e}\mathfrak{f}$, we see that
\begin{align*} \sum_{\mathfrak{e} \mid \mathfrak{R} \mathfrak{d}^{-1}} \mu(\mathfrak{e}) \sum_{\substack{\bb:~N\bb \le x/N\mathfrak{d}\\ [\mathfrak{b}] = [\mathfrak{d}]^{-1} \\ \mathfrak{e} \mid \mathfrak{b}}} 1 &= \sum_{\mathfrak{e} \mid \mathfrak{R} \mathfrak{d}^{-1}} \mu(\mathfrak{e}) \sum_{\substack{\mathfrak{f}:~N\mathfrak{f}\le x/N(\mathfrak{d}\mathfrak{e}) \\ [\mathfrak{f}] = [\mathfrak{d}\mathfrak{e}]^{-1}}} 1 \\
&= \sum_{\mathfrak{e} \mid \mathfrak{R} \mathfrak{d}^{-1}} \left(\frac{\Psi x}{h} \frac{\mu(\mathfrak{e})}{N\mathfrak{d} N\mathfrak{e}} + O\left(\left(\frac{x}{N(\mathfrak{d}\mathfrak{e})}\right)^{1-1/d}\right)\right)\\
&= \frac{\Psi x}{h \cdot N\mathfrak{d}} \cdot \frac{\phi(\mathfrak{\mathfrak{R} \mathfrak{d}^{-1}})}{N(\mathfrak{R} \mathfrak{d}^{-1})} + O\left(x^{1-1/d} \sum_{\mathfrak{e} \mid \mathfrak{R} \mathfrak{d}^{-1}} \frac{1}{(N(\mathfrak{d}\mathfrak{e}))^{1-1/d}}\right). \end{align*}
(Here the ideal-theoretic $\phi$-function is defined by $\phi(\mathfrak{u}) =\#(\Oo/\mathfrak{u})^{\times}$.)
Plugging this back into \eqref{eq:complicatedinnersum}, and using that $|g_{\mathfrak{r}}(\mathfrak{d})| \le 1$,
\[ \sum_{\substack{(\alpha)\\ 0 < |N(\alpha)|\le x}} g_{\mathfrak{r}}(\alpha) = \frac{\Psi x}{h} \cdot \frac{1}{N\mathfrak{R}} \sum_{\mathfrak{d} \mid \mathfrak{R}} g_{\mathfrak{r}}(\mathfrak{d}) \phi(\mathfrak{R} \mathfrak{d}^{-1}) + O\Bigg(x^{1-1/d}\sum_{\substack{\mathfrak{d}, \mathfrak{e} \\ \mathfrak{d}\mathfrak{e} \mid \mathfrak{R}}} \frac{1}{N(\mathfrak{de})^{1-1/d}}\Bigg).\]
The error term here is harmless: For any $\epsilon >0$, there are $\ll_{\epsilon} N(\mathfrak{R})^{\epsilon} \le z^{k\epsilon} = x^{\epsilon/2d}$ ideal divisors of $\mathfrak{R}$. Hence, the number of pairs $\mathfrak{d}, \mathfrak{e}$ with $\mathfrak{d}\mathfrak{e}$ dividing $\mathfrak{R}$ is crudely $\ll_{\epsilon} x^{\epsilon/d}$. Trivially, $1/N(\mathfrak{d}\mathfrak{e})^{1-1/d} \le 1$, and so we see (taking $\epsilon =1/4$) that the $O$-term above is $O(x^{1-\frac{3}{4d}})$. The sum on $\mathfrak{d}$ dividing $\mathfrak{R}$ appearing in the main term can be explicitly evaluated; we find that
\begin{equation}\label{eq:explicitevaluation} \sum_{\substack{(\alpha)\\ 0 < |N(\alpha)|\le x}} g_{\mathfrak{r}}(\alpha) = \frac{\Psi x}{h} \cdot G(\mathfrak{r}) + O(x^{1-\frac{3}{4d}}),  \end{equation}
where
\[ G(\mathfrak{r}) := \prod_{\mathfrak{q}^e\parallel \mathfrak{r}} \left(\frac{1}{N\mathfrak{q}}\left(1-\frac{1}{N\mathfrak{q}}\right)^{e} + \left(\frac{-1}{N\mathfrak{q}}\right)^{e}\left(1-\frac{1}{N\mathfrak{q}}\right)\right). \]
We see from this formula that $G(\mathfrak{r})$ vanishes unless $\mathfrak{r}$ is squarefull, by which we mean that  each prime ideal divisor of $\mathfrak{r}$ is repeated.

Substituting \eqref{eq:explicitevaluation} back into \eqref{eq:initial},
\[ \sum_{\substack{(\alpha)\\ 0 < |N(\alpha)|\le x}} \bigg(\sum_{N\pp\le z}\kappa_p g_\pp(\alpha)\bigg)^{k} = \frac{\Psi x}{h} \sum_{\substack{\pp_1, \dots, \pp_k \\ \text{each }N\pp_i \le z \\ \pp_1 \cdots \pp_k\text{ squarefull}}} \kappa_{\pp_1} \cdots \kappa_{\pp_k} G(\pp_1 \cdots \pp_k) + O(x^{1-\frac{3}{4d}} z^{k}).\]
The $O$-term is $\ll x^{1-\frac{1}{4d}}$, which will be negligible for us. The main term can be rewritten as
\[ \frac{\Psi x}{h} \sum_{s \le k/2} \frac{1}{s!}\sum_{\substack{\mathfrak{q}_1, \dots, \mathfrak{q}_s \\ \mathfrak{q}_i\text{ distinct}\\\text{each }N\mathfrak{q}_i\le z}} \sum_{\substack{e_1,\dots, e_s \ge 2 \\ \sum e_i = k}} \frac{k!}{e_1! \cdots e_s!} \cdot \kappa_{\mathfrak{q}_1}^{e_1} \cdots \kappa_{\mathfrak{q}_s}^{e_s} \cdot G(\mathfrak{q}_1^{e_1} \cdots \mathfrak{q}_s^{e_s}).\]

Let us estimate the contribution from the values of $s < k/2$. We use the easily seen inequality $|G(\mathfrak{q}_1^{e_1} \cdots \mathfrak{q}_s^{e_s})| \le \frac{1}{N(\mathfrak{q}_1 \cdots \mathfrak{q}_s)}$ to deduce that
\begin{multline*} \frac{\Psi x}{h} \sum_{s < k/2} \frac{1}{s!}\sum_{\substack{\mathfrak{q}_1, \dots, \mathfrak{q}_s \\ \mathfrak{q}_i\text{ distinct}\\ \text{each }N\mathfrak{q}_i\le z}} \sum_{\substack{e_1,\dots, e_s \ge 2 \\ \sum e_i = k}} \frac{k!}{e_1! \cdots e_s!} \cdot \kappa_{\mathfrak{q}_1}^{e_1} \cdots \kappa_{\mathfrak{q}_s}^{e_{s}} \cdot G(\mathfrak{q}_1^{e_1} \cdots \mathfrak{q}_s^{e_s}) \\ \ll x \sum_{s < k/2} \sum_{\substack{\mathfrak{q}_1,\dots, \mathfrak{q}_s \\\text{each $N\mathfrak{q}_i \le z$}}} \frac{1}{N(\mathfrak{q}_1\cdots \mathfrak{q}_s)} = x \sum_{s < k/2} \left(\sum_{N\mathfrak{q}\le z}\frac{1}{N\mathfrak{q}}\right)^s \ll x(\log_2{x})^{\lfloor (k-1)/2\rfloor}, \end{multline*}
since each $s$ in the sum is at most $\frac{k-1}{2}$ and $\sum_{N\mathfrak{q}\le z}\frac{1}{N\mathfrak{q}} \le \log_2{x} + O(1)$. Thus, the values $s < k/2$ contribute $\ll x(\log_2{x})^{\frac{k-1}{2}}$ when $k$ is odd and $\ll x(\log_2{x})^{\frac{k}{2}-1}$ when $k$ is even. This completes the proof of Lemma \ref{lem:technical} in the odd $k$ case.

When $k$ is even, there is an additional contribution corresponding to $s=k/2$ and $e_1=e_2=\dots=e_{k/2} = 2$, of size
\[ \frac{\Psi x}{h} \cdot \frac{k!}{2^{k/2} \cdot \frac{k}{2}!} \sum_{\substack{\mathfrak{q}_1, \dots, \mathfrak{q}_{k/2} \\ \mathfrak{q}_i\text{ distinct}\\\text{each }N\mathfrak{q}_i\le z}} \prod_{i=1}^{k/2} \frac{\kappa_{\mathfrak{q}_i}^2}{N\mathfrak{q}_i}\left(1-\frac{1}{N\mathfrak{q}_i}\right).\]
Forgetting the distinctness restriction, we obtain an upper bound on this last sum of
\begin{align*} \Bigg(\sum_{N\mathfrak{q}\le z} \frac{\kappa_{\mathfrak{q}}^2}{N\mathfrak{q}}\Bigg)^{k/2} = \Bigg(\sum_{j=1}^{h} \kappa_j^2 \sum_{\substack{N\mathfrak{q}\le z \\ \mathfrak{q} \in \Cc_j}} \frac{1}{{N\mathfrak{q}}}\Bigg)^{k/2} &\le \Bigg(\bigg(\frac{1}{h}\sum_{j=1}^{h}\kappa_j^2\bigg) \log_2{x} + O(1)\Bigg)^{k/2} \\
&= \Bigg(\frac{1}{h}\sum_{j=1}^{h}\kappa_j^2\Bigg)^{k/2} (\log_2{x})^{k/2} + O((\log_2{x})^{\frac{k}{2}-1}). \end{align*} It is easy to obtain a lower bound of the same form. Indeed, for any given choices of $\mathfrak{q}_1, \dots, \mathfrak{q}_{\frac{k}{2}-1}$,
\begin{align*} \sum_{\substack{N\mathfrak{q}\le z \\\mathfrak{q}\ne \mathfrak{q}_1,\dots,\mathfrak{q}_{\frac{k}{2}-1} }} \frac{\kappa_{\mathfrak{q}}^2}{N\mathfrak{q}}\left(1-\frac{1}{N\mathfrak{q}}\right) &\ge \sum_{N\mathfrak{q}\le z} \frac{\kappa_{\mathfrak{q}}^2}{N\mathfrak{q}} + O(1) = \sum_{j=1}^{h} \kappa^2_j \sum_{\substack{N\mathfrak{q} \le z \\ \mathfrak{q} \in \Cc_j}} \frac{1}{N\mathfrak{q}} + O(1) \\
&= \left(\frac{1}{h}\sum_{j=1}^{h}\kappa_j^2\right)\log_2{z} + O(1) = \left(\frac{1}{h}\sum_{j=1}^{h}\kappa_j^2\right)\log_2{x} + O(1).\end{align*}
Repeating this procedure, we eventually find that
\begin{align*} \sum_{\substack{\mathfrak{q}_1, \dots, \mathfrak{q}_{k/2} \\ \mathfrak{q}_i\text{ distinct}\\\text{each }N\mathfrak{q}_i\le z}} \prod_{i=1}^{h} \frac{\kappa_{\mathfrak{q}_i}^2}{N\mathfrak{q}_i}\left(1-\frac{1}{N\mathfrak{q}_i}\right) &\ge \left(\left(\frac{1}{h}\sum_{j=1}^{h}\kappa_j^2\right)\log_2{x} + O(1)\right)^{k/2} \\
&=\left(\frac{1}{h}\sum_{j=1}^{h}\kappa_j^2\right)^{k/2} (\log_2{x})^{k/2} + O((\log_2{x})^{\frac{k}{2}-1}).
\end{align*}
Combining these estimates with the results of the preceding paragraph completes the proof in the even $k$ case.
\end{proof}

\begin{remark} Theorem \ref{thm:EKstandard} could also be proved by applying the results of \cite[Chapter 2]{dekroon66}, with  Lemma \ref{lem:landau} used as the analytic input. De Kroon proves an Erd\H{o}s--Kac theorem for $f|_{\Prin(\Z_K)}$, for additive functions $f$ on $\Id(\Z_K)$ satisfying conditions analogous to those in the main theorem of \cite{EK40}. De Kroon's approach follows \cite{EK40}, in that the central limit theorem is used in combination with Brun's sieve. We have preferred to give a more self-contained proof illustrating the flexibility of the method of \cite{GS07}.
\end{remark}

\section{Remarks on Theorem \ref{thm:main}}
\subsection{A $\Prin(\Z_K)$-analogue of an observation of Kac}
Let $d(n)$ denote the classical divisor function. In 1941, Kac \cite{kac41} showed that $\frac{\log d(n)}{\log 2}$ is normally distributed with mean and variance $\log_2{n}$. This can be proved by the following simple argument: According to \cite{EK40}, both $\omega(n)$ and $\Omega(n)$ are normally distributed with mean and variance $\log_2{n}$; now observe that $2^{\omega(n)} \le d(n) \le 2^{\Omega(n)}$ for all $n$.

Substituting the results of \cite{liu04} for those of \cite{EK40}, an identical argument shows that the divisor function on $\Id(\Z_K)$ --- which by abuse of notation we will also denote $d$ --- is normally distributed with mean and variance $\log_2 N(\aa)$. Seeking a $\Prin(\Z_K)$ analogue, define $\delta(\alpha)$ for nonzero $\alpha \in \Z_K$ as the number of nonassociate divisors of $\alpha$. It turns out that $\delta$ is distributed in the same way as $d$, by which we mean that $\frac{\log \delta(\alpha)}{\log 2}$ is normal with mean and variance $\log_2 |N(\alpha)|$.

Let us sketch the proof. First, one shows that $\omega(\alpha):=\sum_{i=1}^{h}\omega_i(\alpha)$ and $\Omega(\alpha):=\sum_{i=1}^{h}\Omega_i(\alpha)$ are both normally distributed with mean and variance $\log_2|N(\alpha)|$. For $\omega$, this follows from Theorem \ref{thm:EKstandard} with $\kappa_1 = \dots = \kappa_h= 1$. The $\Omega$ assertion then follows from Proposition \ref{prop:Omeganormal}. (For these claims, one could also appeal to \cite{dekroon66}.) Next, one observes that  $\delta(\alpha) \le d(\alpha) \le 2^{\Omega(\alpha)}$. On the other hand, one can construct many nonassociate divisors of $\alpha$ by the following recipe: For each $i=1,2,\dots,h$, list the distinct prime ideal divisors of $\alpha$ belonging to $\Cc_i$, choose a subset of these whose cardinality is a multiple of $h$, and then multiply the prime ideals from each of these subsets. This construction yields
\[ \delta(\alpha) \ge \prod_{i=1}^{h} \Bigg(\sum_{\substack{0 \le j \le \omega_i(\alpha) \\ h \mid j}}\binom{\omega_i(\alpha)}{j} \Bigg). \]
For each $i$, the sum on $j$ is $\gg 2^{\omega_i(\alpha)}$. (One can see this by rewriting the sum as $\frac{1}{h}\sum_{\zeta}(1+\zeta)^{\omega_i(\alpha)}$, where $\zeta$ runs over the $h$th roots of unity, and noting that $\zeta=1$ dominates.) It follows that
\[ \delta(\alpha) \gg 2^{\omega_1(\alpha) + \dots + \omega_h(\alpha)} = 2^{\omega(\alpha)}. \]
Thus,
\[ 2^{\omega(\alpha)+O(1)} \le \delta(\alpha)\le 2^{\Omega(\alpha)}, \] and now we can obtain the normal distribution result for $\delta$ in the same way as for $d$.

\subsection{Average order results}
One can show that as $x\to\infty$, the average of $\nu(\alpha)$ on principal ideals $(\alpha)$ of norm $\le x$ is $\sim A(\log\log{x})^D$ (for the same constant $A$ from Theorem \ref{thm:main}), while the corresponding average of $\delta(\alpha)$ is $\sim \frac{\Psi}{h}\log{x}$. These estimates are considerably simpler to prove than the normal order results discussed above. Indeed, they follow more or less immediately from
\[ \sum_{\substack{(\pi):~|N(\pi)| \le x \\ \pi\text{ irreducible}}} \frac{1}{|N(\pi)|} \sim A(\log\log{x})^{D} \quad\text{and}\quad   \sum_{(\alpha):~0<|N(\alpha)| \le x} \frac{1}{|N(\alpha)|} \sim \frac{\Psi}{h}\log{x}. \] The second of these may be derived quickly by partial summation from Lemma \ref{lem:weber}. The first can be proved by methods discussed earlier in this article, or by partial summation in conjunction with R\'emond's asymptotic formula for the count of nonassociate irreducibles of bounded norm \cite[Th\'eor\`eme II, pp. 391--392, and Corollaire, pp. 409--410]{remond66}.

\section{Equidistribution of $\nu(\alpha)$ in residue classes: \\Proof of Theorem \ref{thm:equidistribution}}

Below, we say a multiplicative function $f$ on $\Id(\Z_K)$ is of \emph{finite order} if for some positive integer $r$, all of the nonzero values of $f$ are $r$th roots of unity.

\begin{lem}\label{lem:hall} Let $f$ be a multiplicative function on $\Id(\Z_K)$ of finite order. Suppose that
\begin{equation}\label{eq:wcond}\sum_{f(\pp)\ne 1} \frac{1}{N\pp} \quad\text{diverges}. \end{equation}
Then $f$ has mean value $0$ on $\Id(\Z_K)$, in the sense that
\[ \sum_{N\aa \le x} f(\aa) = o(x), \]
as $x\to\infty$.
\end{lem}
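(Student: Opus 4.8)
The plan is to reduce the lemma to a Halász-type mean value theorem for the arithmetical semigroup $\Id(\Z_K)$, whose norm-counting function is controlled by Lemma \ref{lem:weber} (so $\#\{\aa : N\aa \le x\} = \Psi x + O(x^{1-1/d})$, which is exactly the regularity such a theorem requires). In the convenient qualitative form, the theorem states that a multiplicative $f$ with $|f|\le 1$ satisfies $\sum_{N\aa\le x} f(\aa) = o(x)$ provided that, for \emph{every} real $t$, the pretentious distance (in the sense of \cite{GS07}) between $f$ and the completely multiplicative function $\aa\mapsto N\aa^{it}$,
\[ \mathbb{D}(f, N^{it}; x)^2 := \sum_{N\pp \le x} \frac{1 - \mathrm{Re}(f(\pp)\, N\pp^{-it})}{N\pp}, \]
tends to infinity with $x$. (The passage from divergence for each fixed $t$ to the lower bound on $\min_{|t|\le \log x}$ that Halász's method actually uses is handled by the standard Lipschitz-continuity-in-$t$ argument.) Since $f$ takes values that are roots of unity or $0$, we have $|f|\le 1$, so the entire burden is to verify this divergence for all real $t$; this is where both hypotheses on $f$ enter.

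The case $t=0$ is immediate and uses finite order directly. Whenever $f(\pp)\ne 1$, the value $f(\pp)$ is either $0$ or an $r$th root of unity different from $1$, so $1-\mathrm{Re}(f(\pp))$ is bounded below by a positive constant $c_r$ depending only on $r$. Hence
\[ \sum_{N\pp \le x} \frac{1 - \mathrm{Re}(f(\pp))}{N\pp} \ge c_r \sum_{\substack{N\pp \le x \\ f(\pp) \ne 1}} \frac{1}{N\pp}, \]
and the right-hand side tends to infinity by the divergence hypothesis \eqref{eq:wcond}.

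The case $t\ne 0$ is the crux, and it is where finite order re-enters in a less obvious way. I would use the triangle inequality for the distance under multiplication, $\mathbb{D}(f_1f_2,g_1g_2;x) \le \mathbb{D}(f_1,g_1;x)+\mathbb{D}(f_2,g_2;x)$, applied $r$ times with each $f_i=f$ and $g_i=N^{it}$, to obtain $\mathbb{D}(f^r, N^{irt}; x) \le r\,\mathbb{D}(f, N^{it}; x)$. Because $f$ has order dividing $r$, we have $f(\pp)^r \in \{0,1\}$, so on primes with $f(\pp)\ne 0$ the terms of $\mathbb{D}(f^r,N^{irt};x)^2$ equal $\frac{1-\cos(rt\log N\pp)}{N\pp}$, while on primes with $f(\pp)=0$ they equal $\frac{1}{N\pp}$, which only helps; thus
\[ \mathbb{D}(f^r, N^{irt}; x)^2 \ge \sum_{N\pp \le x} \frac{1 - \cos(rt \log N\pp)}{N\pp}. \]
It therefore suffices to show that $\sum_{\pp} \frac{1-\cos(s\log N\pp)}{N\pp}$ diverges for every $s\ne 0$ (take $s=rt$): its divergence forces $\mathbb{D}(f^r,N^{irt};x)\to\infty$ and hence $\mathbb{D}(f,N^{it};x)\to\infty$. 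This last divergence is a soft consequence of the prime ideal theorem underlying Lemma \ref{lem:landau}: as $\sigma\to 1^{+}$ one has $\sum_{\pp} N\pp^{-\sigma} \sim \log\frac{1}{\sigma-1}\to\infty$, whereas $\mathrm{Re}\sum_{\pp} N\pp^{-\sigma-is} = \mathrm{Re}\log\zeta_K(\sigma+is)+O(1) = O(1)$, the boundedness coming from the holomorphy and non-vanishing of $\zeta_K$ on the line $\mathrm{Re}(s)=1$ away from the pole. Subtracting, $\sum_{\pp}\frac{1-\cos(s\log N\pp)}{N\pp^{\sigma}}\to\infty$, and since the summands are nonnegative, Abel's theorem upgrades this to divergence of the series at $\sigma=1$.

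The main obstacle is simply having a clean Halász-type mean value theorem available over $\Id(\Z_K)$; once it is in hand, the verification of the distance divergence for all $t$ is routine, the only genuinely arithmetic input being the non-vanishing of $\zeta_K$ on $\mathrm{Re}(s)=1$ that already powers Lemma \ref{lem:landau}. If a self-contained treatment is preferred, I would instead run Halász's original argument directly, bounding the Dirichlet series $\sum_{\aa} f(\aa)\,N\aa^{-s}$ against $\zeta_K(s)$ on the line $\mathrm{Re}(s)=1+1/\log x$ by means of the distance estimates above and then recovering $\sum_{N\aa\le x} f(\aa)$ through a Perron-type contour integration; the bookkeeping is heavier but introduces nothing beyond Lemmas \ref{lem:weber} and \ref{lem:landau}.
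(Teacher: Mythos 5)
Your proposal follows essentially the same route as the paper: both invoke Hal\'asz's mean value theorem in the arithmetic-semigroup setting (Lucht--Reifenrath) and reduce the lemma to showing that $\sum_{\pp} \frac{1-\Re(f(\pp)N\pp^{-it})}{N\pp}$ diverges for every real $t$, with the $t=0$ case handled identically via the finite-order hypothesis. The one place you genuinely differ is the case $t\ne 0$: the paper argues directly from the prime ideal theorem that for $\gg_r x/\log x$ prime ideals of norm up to $x$ the value $N\pp^{it}$ lies at distance $\gg_r 1$ from every $r$th root of unity, so each such prime contributes $\gg_r 1/N\pp$ regardless of what $f(\pp)$ is; you instead pass to $f^r$ via the pretentious triangle inequality and then use the non-vanishing of $\zeta_K$ on $\Re(s)=1$. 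Both are standard and both work, but one step of yours needs a small repair: for primes with $f(\pp)=0$ the corresponding term of $\mathbb{D}(f^r,N^{irt};x)^2$ is $1/N\pp$, which is \emph{not} always at least $(1-\cos(rt\log N\pp))/N\pp$, since the latter can be as large as $2/N\pp$; so the displayed lower bound is false as written. The fix is immediate: either $\sum_{f(\pp)=0}1/N\pp$ diverges, in which case $\mathbb{D}(f^r,N^{irt};x)\to\infty$ trivially, or it converges, in which case restricting the cosine sum to the primes with $f(\pp)\ne 0$ changes it by only $O(1)$ and your argument goes through.
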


\begin{proof} This appears to be well-known when $K=\Q$, and the argument in the general case is the same; for lack of a suitable reference we sketch the proof. We use a theorem of Hal\'asz \cite{halasz68}, as generalized to ``arithmetic semigroups'' (a setting which includes $\Id(\Z_K)$) by Lucht and Reifenrath \cite[Theorem 6.1]{LR01}. It suffices to show that for each real number $t$, the series
\begin{equation}\label{eq:todiverge} \sum_{\pp} \frac{1-\Re(f(\pp) \cdot N(\pp)^{-it})}{N\pp} \end{equation}
diverges. The divergence when $t=0$ follows quickly from \eqref{eq:wcond}. Now suppose that $t\ne 0$. Fix a positive integer $r$ such that $f(\Id(\Z_K))$ is contained in $\{0\}\cup\{\zeta: \zeta^r=1\}$. The prime ideal theorem implies that for $\gg_{r} x/\log{x}$ prime ideals of norm not exceeding $x$, the quantity $N(\pp)^{it}$ lies at a distance $\gg_{r} 1$ from each $r$th root of unity; the divergence of \eqref{eq:todiverge} is an easy consequence.\end{proof}

\begin{lem}\label{lem:idealclass} Let $f$ be a multiplicative function on $\Id(\Z_K)$ of finite order. Suppose that
\[ \sum_{\substack{\pp \text{ principal}\\ f(\pp) \ne 1}} \frac{1}{N\pp} \quad\text{diverges.} \]
Then $f$ has mean value 0 along each ideal class, in the sense that for each $i=1,2,\dots, h$,
\[ \sum_{\substack{N\aa \le x \\ \aa \in \Cc_i}} f(\aa) = o(x),\]
as $x\to\infty$.
\end{lem}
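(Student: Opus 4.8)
The plan is to deduce the per-class statement from the global mean-value statement of Lemma~\ref{lem:hall} by twisting $f$ with characters of the class group and invoking orthogonality. Let $\chi$ run over the $h$ characters of the finite abelian group $\Cl(\Z_K)$, and write $[\aa]$ for the class of $\aa$. The orthogonality relations give, for every nonzero ideal $\aa$,
\[ \mathbf{1}[\,\aa \in \Cc_i\,] = \frac{1}{h}\sum_{\chi} \overline{\chi(\Cc_i)}\,\chi([\aa]). \]
Hence
\[ \sum_{\substack{N\aa \le x\\ \aa \in \Cc_i}} f(\aa) = \frac{1}{h}\sum_{\chi} \overline{\chi(\Cc_i)} \sum_{N\aa \le x} f(\aa)\,\chi([\aa]), \]
and since there are only $h$ characters it suffices to prove that each inner sum $\sum_{N\aa\le x} f(\aa)\chi([\aa])$ is $o(x)$.

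First I would check that each twisted function is again of the type handled by Lemma~\ref{lem:hall}. Fix $\chi$ and set $g = f\cdot(\chi\circ[\cdot])$. The map $\aa\mapsto[\aa]$ is a monoid homomorphism $\Id(\Z_K)\to\Cl(\Z_K)$ and $\chi$ is a homomorphism into roots of unity, so $\chi\circ[\cdot]$ is completely multiplicative with all values roots of unity of order dividing $h$. Thus $g$ is multiplicative, and its nonzero values are roots of unity of order dividing $\lcm(r,h)$, where $r$ is an order for $f$; in particular $g$ is of finite order. So Lemma~\ref{lem:hall} applies to $g$ as soon as its divergence hypothesis is verified, and then $\sum_{N\aa\le x}g(\aa) = \sum_{N\aa\le x} f(\aa)\chi([\aa]) = o(x)$, as required.

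The one point that genuinely uses the hypothesis of the present lemma is the verification of the divergence condition for $g$, uniformly in $\chi$. Here the key observation is that if $\pp$ is a principal prime then $[\pp]$ is the identity class, so $\chi([\pp]) = 1$ and therefore $g(\pp) = f(\pp)$. Consequently the set of primes $\pp$ with $g(\pp)\ne 1$ contains every principal prime $\pp$ with $f(\pp)\ne 1$, whence
\[ \sum_{g(\pp)\ne 1} \frac{1}{N\pp} \ge \sum_{\substack{\pp\text{ principal}\\ f(\pp)\ne 1}} \frac{1}{N\pp}, \]
and the right-hand side diverges by assumption. This bound is independent of $\chi$, so Lemma~\ref{lem:hall} yields $\sum_{N\aa\le x} f(\aa)\chi([\aa]) = o(x)$ for every character $\chi$; summing the $h$ terms in the orthogonality expansion then gives $\sum_{N\aa\le x,\ \aa\in\Cc_i} f(\aa) = o(x)$.

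I expect the main (indeed only) conceptual obstacle to be recognizing that the hypothesis is deliberately phrased in terms of principal primes precisely so that the required divergence survives every character twist, since all characters of $\Cl(\Z_K)$ are trivial on principal primes. Once this is noticed, the argument is a routine orthogonality reduction to Lemma~\ref{lem:hall}, requiring no further analytic input beyond what that lemma already supplies.
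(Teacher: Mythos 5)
Your proposal is correct and is exactly the paper's argument: the paper likewise reduces via orthogonality of class-group characters to showing each twist $\chi f$ has mean value $0$ on $\Id(\Z_K)$ and then cites Lemma~\ref{lem:hall}, with the (unstated but essential) point being precisely the one you identify, namely that $\chi$ is trivial on principal primes so the divergence hypothesis survives every twist. You have simply written out the details the paper leaves implicit.
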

\begin{proof} By the orthogonality relations for group characters, it suffices to show that for each character $\chi$ of the class group $\Cl(\Z_K)$, the function $\chi f$ has mean value $0$ on $\Id(\Z_K)$. 	This follows immediately from Lemma \ref{lem:hall}.
\end{proof}

\begin{proof}[Proof of Theorem \ref{thm:equidistribution}] Throughout this proof, we assume that the ideal classes $\Cc_i$ are numbered so that $\Cc_1$ is the principal class. By the orthogonality relations for additive characters mod $m$, it suffices to show  that for all nontrivial $m$th roots of unity $\zeta$,
\begin{equation}\label{eq:uniform} \sum_{\substack{(\alpha) \\ 0<|N(\alpha)| \le x}} \zeta^{\nu(\alpha)} = o(x), \qquad\text{as $x\to\infty$}.\end{equation}

Fix a nonzero, squarefull ideal $\ssf$ of $\Z_K$. We study the contribution to \eqref{eq:uniform} from $(\alpha)$ with squarefull part $\ssf$ in $\Id(\Z_K)$. Write $(\alpha)=\mathfrak{s}\mathfrak{u}$, so that $\mathfrak{u}$ is a squarefree ideal, $\ssf$ and $\uu$ are comaximal, and $[\uu] = [\ssf]^{-1}$. Then
\begin{equation}\label{eq:nualphaexpr} \nu(\alpha) = \sum_{\tau \in \mathcal{T}} \nu_{\tau}(\alpha) = \omega_1(\alpha) + \sum_{\substack{\tau \in \mathcal{T} \\ t_{1}(\tau)=0}} \nu_{\tau}(\alpha), \end{equation}
and
\[  \omega_1(\alpha) = \omega_1(\ssf) + \omega_1(\uu). \]
(We used here that $(1,0,\dots,0)$ is the only type in $\mathcal{T}$ with a nonvanishing $t_1$ coefficient, and that $\nu_{(1,0,\dots,0)}(\alpha)=\omega_1(\alpha)$.) For each $\tau\in \mathcal{T}$ with $t_{1}(\tau)= 0$, let
\[ \mathcal{D}(\ssf,\tau) = \{\dd \in \Id(\Z_K): \dd\mid \ssf,~\Omega_i(\dd) \le t_i(\tau)\text{ for all $i=1,2,\dots,h$}\}. \]
Any irreducible of type $\tau$ dividing $\alpha$ can be written uniquely as the product of an element of $\mathcal{D}(\ssf,\tau)$ and a cofactor relatively prime to $\ssf$. Thus,
\begin{equation}\label{eq:binomialexpression} \nu_{\tau}(\alpha) = \sum_{\dd \in \mathcal{D}(\ssf,\tau)} \prod_{i=1}^{h}\binom{\Omega_i(\alpha)-\Omega_i(\ssf)}{t_i(\tau) - \Omega_i(\dd)} = \sum_{\dd \in \mathcal{D}(\ssf,\tau)} \prod_{i=2}^{h}\binom{\omega_i(\uu)}{t_i(\tau) - \Omega_i(\dd)}. \end{equation}
Keeping in mind the universal bound $t_i(\tau) \le h$, it follows from \eqref{eq:binomialexpression} that the residue class class mod $m$ of
\[ \sum_{\substack{\tau \in \mathcal{T} \\ t_{1}(\tau)=0}} \nu_{\tau}(\alpha) \]
depends only on the vector
\[ (\omega_2(\uu)\bmod{mh!}, \dots, \omega_h(\uu) \bmod{mh!}) \in (\Z/mh!\Z)^{h-1}. \] Consequently, finite Fourier theory implies that
\[ \zeta^{\sum_{\substack{\tau \in \mathcal{T},~t_{1}(\tau)=0}} \nu_{\tau}(\alpha)} \]
can be written as a finite $\C$-linear combination of terms of the form
\[ \zeta_2^{\omega_2(\uu)} \cdots \zeta_{h}^{\omega_h(\uu)}, \]
where $\zeta_2, \dots, \zeta_{h}$ are $(mh!)$th roots of unity. Referring back to \eqref{eq:nualphaexpr}, we see that $\zeta^{\nu(\alpha)}$ is a finite $\C$-linear combination of expressions of the form
\[ \zeta^{\omega_1(\uu)} \zeta_2^{\omega_2(\uu)} \cdots \zeta_{h}^{\omega_h(\uu)}. \]
Thus,
\[ \sum_{\substack{(\alpha) \\ \text{squarefull part $\ssf$} \\ 0 <|N(\alpha)| \le x}} \zeta^{\nu(\alpha)} \]
is a finite $\C$-linear combination of sums of the form
\[ \sum_{\substack{N\uu \le x/N(\ssf) \\ [\uu] = [\ssf]^{-1}}} \mathbf{1}_{\gcd(\uu,\ssf)=1} \cdot \mu^2(\mathfrak{u}) \cdot \zeta^{\omega_1(\uu)} \zeta_2^{\omega_2(\uu)} \cdots \zeta_{h}^{\omega_h(\uu)}. \]
For all choices of $(mh)!$th roots of unity $\zeta_2,\dots,\zeta_h$, the function
\[ \mathfrak{u} \mapsto \mathbf{1}_{\gcd(\uu,\ssf)=1} \cdot \mu^2(\mathfrak{u}) \cdot \zeta^{\omega_1(\uu)} \zeta_2^{\omega_2(\uu)} \cdots \zeta_{h}^{\omega_h(\uu)}\]
satisfies the conditions of Lemma \ref{lem:idealclass}. Hence, the contribution to \eqref{eq:uniform} from $(\alpha)$ with a fixed squarefull part is $o(x)$, as $x\to\infty$.

We now finish the proof of Theorem \ref{thm:equidistribution}. Let $\epsilon > 0$. Let $B$ be a large, fixed real number. Continuing to use $\ssf$ for the squarefull part of $(\alpha)$, we see that the contribution to \eqref{eq:uniform} from $(\alpha)$ with $N\ssf \le B$ is $o(x)$, as $x \to\infty$. On the other hand,
\[ \Bigg|\sum_{\substack{(\alpha) \\ 0 < |N(\alpha)| \le x \\ N\ssf > B}} \zeta^{\nu(\alpha)}\Bigg| \le \sum_{\substack{\ssf\text{ squarefull} \\ B < N\ssf \le x}} \sum_{\substack{N\aa \le x \\ \ssf \mid \aa}} 1 \ll x\sum_{\substack{\ssf\text{ squarefull} \\ N\ssf >B}} \frac{1}{N\ssf}. \]
The sum appearing here is the tail of a convergent series, since
\[ \sum_{\ssf\text{ squarefull}}\frac{1}{N\ssf} = \prod_{\pp}\left(1 + \frac{1}{N\pp^2} + \frac{1}{N\pp^3} + \dots \right) <\infty.\]
So if we choose $B$ sufficiently large, those $(\alpha)$ with $N\ssf>B$ contribute less than $\epsilon x$. Since $\epsilon >0$ is arbitrary, Theorem \ref{thm:equidistribution} follows.
\end{proof}

\begin{remark} The distribution of $\delta(\alpha)$ in residue classes is much more complicated than that of $\nu(\alpha)$, even in the case $K=\Q$ (for which see \cite[Chapter 5, \S3]{narkiewicz86}).
\end{remark}

\section*{Acknowledgements}
The author is supported by NSF award DMS-1402268. He thanks Pete L. Clark for helpful discussions about arithmetic in $\Prin(\Z_K)$, and he thanks Carl Pomerance for suggesting the consideration of the behavior of the $\Prin(\Z_K)$ divisor function.

\providecommand{\bysame}{\leavevmode\hbox to3em{\hrulefill}\thinspace}
\providecommand{\MR}{\relax\ifhmode\unskip\space\fi MR }
\providecommand{\MRhref}[2]{%
  \href{http://www.ams.org/mathscinet-getitem?mr=#1}{#2}
}
\providecommand{\href}[2]{#2}


\begin{thebibliography}{B{\"O}RS05}

\bibitem[Add57]{addison57}
A.W.~Addison, \emph{A note on the compositeness of numbers}, Proc. Amer. Math. Soc. \textbf{8} (1957), 151--154.

\bibitem[B{\"O}RS05]{BORS05}
D.M.~Bradley, A.E.~{\"O}zl{\"u}k, R.A.~Rozario, and C.~Snyder, \emph{The distribution of the irreducibles in an algebraic number
              field}, J. Aust. Math. Soc. \textbf{79} (2005), 369--390.

\bibitem[EK40]{EK40}
P.~Erd\H{o}s and M.~Kac, \emph{The {G}aussian law of errors in the theory of
  additive number theoretic functions}, Amer. J. Math. \textbf{62} (1940),
  738--742.

\bibitem[Ell80]{elliott80}
P.D.T.A. Elliott, \emph{Probabilistic number theory {II}: Central limit
  theorems}, Grundlehren der Mathematischen Wissenschaften, vol. 240,
  Springer-Verlag, Berlin-New York, 1980.


\bibitem[GHK06]{GHK06}
A.~Geroldinger and F.~Halter-Koch, \emph{Non-unique factorizations: Algebraic,
  combinatorial and analytic theory}, Pure and Applied Mathematics, vol. 278,
  Chapman \& Hall/CRC, Boca Raton, FL, 2006.

\bibitem[GR09]{GR09}
A.~Geroldinger and I.Z.~Ruzsa, \emph{Combinatorial number theory and additive group theory}, Advanced Courses in Mathematics, CRM Barcelona, Birkh\"auser Verlag, Basel, 2009.

\bibitem[GS07]{GS07}
A.~Granville and K.~Soundararajan, \emph{Sieving and the {E}rd{\H o}s-{K}ac
  theorem}, Equidistribution in number theory, an introduction, NATO Sci. Ser.
  II Math. Phys. Chem., vol. 237, Springer, Dordrecht, 2007, pp.~15--27.


\bibitem[Hal68]{halasz68}
G.~Hal\'{a}sz, \emph{\"{U}ber die {M}ittelwerte multiplikativer zahlentheoretischer {F}unktionen}, Acta Math. Acad. Sci. Hungar. \textbf{19} (1968), 365--403.


\bibitem[HW08]{HW08}
G.H. Hardy and E.M. Wright, \emph{An introduction to the theory of numbers},
  sixth ed., Oxford University Press, Oxford, 2008.

\bibitem[Kac41]{kac41}
M.~Kac, \emph{Note on the distribution of values of the arithmetic function $d(m)$}, Bull. Amer. Math. Soc. \textbf{47} (1941), 815--817.

\bibitem[KL08]{KL08}
W.~Kuo and Y.-R. Liu, \emph{The {E}rd{\H o}s-{K}ac theorem and its
  generalizations}, Anatomy of integers, CRM Proc. Lecture Notes, vol.~46,
  Amer. Math. Soc., Providence, RI, 2008, pp.~209--216.

\bibitem[Kro66]{dekroon66}
J.P.M.~de~Kroon, \emph{The asymptotic behaviour of additive functions in algebraic number theory}, Compos. Math. \textbf{17} (1965-1966), 207--261.

\bibitem[Lan99]{landau99}
E.~Landau, \emph{Neuer Beweis der Gleichung $\sum_{k=1}^{\infty}\frac{\mu(k)}{k}=0$}. Dissertation, Berlin, 1899.

\bibitem[Lan03]{landau03}
\bysame, \emph{\"{U}ber die zahlentheoretische Funktion $\mu(k)$}, SBer. Kais. Akad. Wissensch. Wien \textbf{112} (1903), 537--570.

\bibitem[Lan11]{landau11}
\bysame, \emph{\"{U}ber die \"{A}quivalenz zweier {H}aupts\"{a}tze der analytischen Zahlentheorie}, SBer. Kais. Akad. Wissensch. Wien \textbf{120} (1911), 973--988.

\bibitem[Lan18]{landau18}
\bysame, \emph{\"{U}ber {I}deale und {P}rimideale in {I}dealklassen}, Math.
  Z. \textbf{2} (1918), 52--154.

\bibitem[Liu04]{liu04}
Y.-R.~Liu, \emph{A generalization of the {E}rd\"os-{K}ac theorem and its
  applications}, Canad. Math. Bull. \textbf{47} (2004), 589--606.

\bibitem[LR01]{LR01}
L.~Lucht and K.~Reifenrath, \emph{Mean-value theorems in arithmetic semigroups}, Acta Math. Hungar. \textbf{93} (2001), 27--57.

\bibitem[Man97]{mangoldt97}
H.~von~Mangoldt, \emph{Beweis der Gleichung $\sum_{k=1}^{\infty}\frac{\mu(k)}{k} = 0$}, Sber. Kgl. Preu{\ss}. Akad. Wiss. Berlin (1897), 835--852.

\bibitem[Nar86]{narkiewicz86}
W.~Narkiewicz, \emph{Uniform distribution of sequences of integers in residue
              classes}, Lecture Notes in Mathematics, vol. 1087,
Springer-Verlag, Berlin, 1984.

\bibitem[Nar04]{narkiewicz04}
\bysame, \emph{Elementary and analytic theory of algebraic numbers},
  third ed., Springer Monographs in Mathematics, Springer-Verlag, Berlin, 2004.


\bibitem[Ols69]{olson69}
J.E.~Olson, \emph{A combinatorial problem on finite {A}belian groups. {I}}, J. Number Theory \textbf{1} (1969), 8--10.

\bibitem[Pil40]{pillai40}
S.S.~Pillai, \emph{Generalisation of a theorem of Mangoldt}, Proc. Indian Acad. Sci., Sect. A. \textbf{11} (1940), 13--20.

\bibitem[R{\'e}m66]{remond66}
P.~R{\'e}mond, \emph{\'{E}tude asymptotique de certaines partitions dans certains semi-groupes}, Ann. Sci. \'Ecole Norm. Sup. (3) \textbf{83} (1966), 343--410.

\bibitem[Sel39]{selberg39}
S.~Selberg, \emph{Zur {T}heorie der quadratfreien {Z}ahlen},
Math. Z. \textbf{44} (1939), 306--318.

\bibitem[Web96]{weber96}
H.~Weber, \emph{Ueber einen in der {Z}ahlentheorie angewandten {S}atz der
  {I}ntegralrechnung}, Nachr. Ges. Wiss. G\"ottingen, Math.-Phys. Kl. (1896),
  275--281.

\end{thebibliography}
\end{document}